\newtheorem{thm}{Theorem}[section]
\newtheorem{lem}[thm]{Lemma}
\newtheorem{observation}[thm]{Observation}
\newtheorem{cor}[thm]{Corollary}
\newtheorem{prob}[thm]{Problem}
\newtheorem{remark}{Remark}
\newtheorem{claim}{Claim}
\renewcommand\caption[1]{\small\refstepcounter{figure}%
\begin{center}\textbf{Fig.\ \thefigure .}\ #1\end{center}\normalsize}
\def\super{\textrm{s}}
\def\cuper{\textrm{c}}
\def\UU{{\cal U}}\def\FF{{\cal F}}\def\CC{{\cal C}}
\def\EE{{\cal E}}
\title{{\textsc{Disjoint dominating and 2-dominating sets in graphs}}}
\date{\today}
\begin{document}

\begin{center}
\noindent\textbf{\Large Disjoint dominating and 2-dominating sets in graphs}
\end{center}

\medskip
\medskip
\medskip
\medskip

\begin{center}{Mateusz Miotk, Jerzy Topp, and Pawe\l{}~\.Zyli\'nski\\[1mm]
University of Gda\'{n}sk, 80-952 Gda\'nsk, Poland\\
%$^{2}$The State University of Applied Sciences in Elbl{\k{a}}g, 82-300 Elbl{\k{a}}g, Poland\\
{\small \texttt{\{mmiotk,j.topp,zylinski\}@inf.ug.edu.pl}}}\end{center}

\medskip
\medskip
\medskip

\begin{abstract}
\noindent A graph $G$ is a $D\!D_2$-graph if it has a pair $(D,D_2)$ of disjoint sets of vertices of $G$ such that $D$ is a dominating set and $D_2$ is a 2-dominating set of $G$. We provide several characterizations and hardness results concerning $D\!D_2$-graphs.

\medskip
\noindent{\bf Keywords:}  Domination, 2-domination, certified domination, NP-hardness\\
{\bf \AmS \; Subject Classification:} 05C69, 05C85
\end{abstract}

\section{Introduction}

Let $G=(V_G,E_G)$ be a graph. A set of vertices $D \subseteq V_G$ of $G$ is {\it dominating} if every vertex in $V_G-D$  has a~neighbor in $D$, while $D$ is {\it $2$-dominating} if every vertex in $V_G-D$  has at least two neighbors in $D$.
A set $D \subseteq V_G$ is a {\it total dominating set} if every vertex has a~neighbor in $D$. A set $D \subseteq V_G$ is a {\it paired-dominating set} if $D$ is a~total dominating set and the subgraph induced by $D$ contains a perfect matching.
Ore \cite{Ore} was the first to observe that a graph with no isolated vertex contains two disjoint dominating sets. Consequently, the vertex set of a graph without isolated vertices can be partitioned into two dominating sets. Various graph theoretic properties and parameters of graphs having disjoint dominating sets are studied in \cite{AK,HHLMS,HT,HLR09,HM18,KS,LR10}. Characterizations of graphs with disjoint dominating and total dominating sets are given in \cite{HLR10+,HLR10,HLR10++,HS08,HS09,KJ,SH11a}, while in \cite{BDGHHU,DDH,DHH17,DGHH,HY} graphs which have the property that their vertex set can be partitioned into two disjoint total dominating sets are studied.  Conditions which guarantee the existence of a~dominating set whose complement contains a 2-dominating set, a paired-dominating set or an independent dominating set are presented in~\cite{HaynesHenning2005,HLR10,HR13,KJ,KS,SH11b}. In this paper we first restrict our attention to conditions which ensure a partition of vertex set of a graph into a dominating set and a 2-dominating set. The study of graphs having a dominating set whose complement is a 2-dominating set has been initiated by Henning and Rall~\cite{HR13}. They define a {\em $D\!D_2$-pair} in a~graph $G$ to be a~pair $(X,Y)$ of disjoint sets of vertices of $G$ such that $X$ is a dominating set, and $Y$ is a~2-dominating set of $G$. A graph that has a~$D\!D_2$-pair is called a~{\em $D\!D_2$-graph}.  It is easy to observe that a complete graph $K_n$ is a~$D\!D_2$-graph if $n\ge 3$, a~path $P_n$ is a~$D\!D_2$-graph if and only if $n=3$ or $n\ge 5$, while a~cycle $C_n$ is a~$D\!D_2$-graph for every $n\ge 3$. On the other hand, if $G$ is a graph obtained by adding a pendant edge to each vertex of an arbitrary graph $F$, then $G$ is not a~$D\!D_2$-graph, but if $H$ is a~graph obtained by adding at least two pendant edges to each vertex of an arbitrary graph $F$, then $H$ is a $D\!D_2$-graph. Henning and Rall~\cite{HR13} observed that every graph with minimum degree at least two is a $D\!D_2$-graph. They also provided a~constructive characterization of trees that are $D\!D_2$-graphs, and established that the complete bipartite graph $K_{3,3}$ is the only connected graph with minimum degree at least three for which $D \cup D_2$ necessarily contains all vertices of the graph. Herein, we continue their study and complete their structural characterization of all $D\!D_2$-graphs. Next, we focus on minimal $D\!D_2$-graphs and provide the relevant characterization of that class of graphs either. All these results have also algorithmic consequences, leading to simple linear time algorithms for recognizing the two aforementioned graph classes. Next, we study optimization problems related to $D\!D_2$-graphs and non-$D\!D_2$-graphs, respectively. Namely, for a~given $D\!D_2$-graph $G$, the purpose is to find a minimal spanning $D\!D_2$-graph of $G$ of minimum or maximum size. We show that both these problems are NP-hard. Finally, if $G$ is a graph which is not a $D\!D_2$-graph, we consider the question of how many edges must be added to $G$ or subdivided in $G$ to ensure the existence of a $D\!D_2$-pair in the resulting graph. The latter problem turned out to be polynomially tractable, while the former one is NP-hard.

For notation and graph theory terminology we in general follow~\cite{ChLZ16}. Specifically, for a vertex $v$ of a~graph $G=(V_G,E_G)$, its {\em neighborhood\/}, denoted by $N_{G}(v)$, is the set of all vertices adjacent to $v$, and the cardinality of $N_G(v)$, denoted by $d_G(v)$, is called the {\em degree} of~$v$. The {\em closed neighborhood\/} of $v$, denoted by $N_{G}[v]$, is the set $N_{G}(v)\cup \{v\}$. In general, for a subset $X\subseteq V_G$ of vertices, the {\em neighborhood\/} of $X$, denoted by $N_{G}(X)$, is defined to be $\bigcup_{v\in X}N_{G}(v)$, and the {\em closed neighborhood\/} of $X$, denoted by $N_{G}[X]$, is the set $N_{G}(X)\cup X$. The minimum degree of a vertex in $G$ is denoted by $\delta(G)$.  A vertex of degree one is called a {\em leaf}, and the only neighbor of a~leaf is called its {\em support vertex} (or simply, its {\em support}). If a support vertex has at least two leaves as neighbors,  we call it a {\em strong\/} support, otherwise it is a {\em weak} support. The set of leaves, the set of weak supports, the set of strong supports, and the set of all supports of $G$ is denoted by $L_G$, $S'_G$, $S''_G$,  and $S_G$, respectively.

\section{Structural characterization of $D\!D_2$-graphs}\label{sec:DD2}

\noindent In this section we present a structural characterization of $D\!D_2$-graphs. We begin with three useful preliminary results.

\begin{observation} \label{observ-1} {\rm \cite{HR13}}. If $(D,D_2)$ is a $D\!D_2$-pair
in a graph $G$, then every leaf of $G$ belongs to $D_2$, while every support of $G$ belongs to $D$, that is, $L_G\subseteq D_2$ and $S_G\subseteq D$.\end{observation}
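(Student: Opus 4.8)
The statement has two parts: every leaf lies in $D_2$, and every support lies in $D$. The plan is to prove the first part directly from the defining properties of a $2$-dominating set, and then derive the second part as an immediate consequence.

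For the first part, let $u$ be a leaf of $G$ with unique neighbor $v$. The hard point — which is really the only point requiring any thought — is to observe that a $2$-dominating set cannot afford to leave a leaf outside it: if $u \notin D_2$, then by definition of $2$-domination $u$ must have at least two neighbors in $D_2$, but $u$ has only one neighbor at all, a contradiction. Hence $u \in D_2$, and since $u$ was arbitrary this gives $L_G \subseteq D_2$.

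For the second part, let $v$ be a support vertex, so $v$ has a leaf neighbor $u$. By the first part, $u \in D_2$. Since $D$ and $D_2$ are disjoint, $u \notin D$, so $u$ is a vertex of $V_G - D$ and must therefore have a neighbor in the dominating set $D$. But $u$'s only neighbor is $v$, whence $v \in D$. As $v$ was an arbitrary support, $S_G \subseteq D$.

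I expect no genuine obstacle here; the whole argument is a two-line exploitation of the fact that a leaf has exactly one neighbor, played once against the $2$-dominating set $D_2$ and once against the dominating set $D$, using disjointness of the pair to pass between them. The only thing to be careful about is the trivial edge case where $V_G - D$ or $V_G - D_2$ is empty, but the argument above never assumes otherwise: it only ever asserts membership of specific vertices.
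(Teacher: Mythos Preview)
Your proof is correct. The paper does not actually supply a proof of this observation; it is quoted from \cite{HR13} and stated without argument, so there is nothing to compare against beyond noting that your two-line exploitation of the degree-one condition is exactly the standard justification one would expect.
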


\begin{observation} \label{observ-2}  A graph $G$ is a $D\!D_2$-graph if and only if $G$ has a~spanning bipartite subgraph $H=(A,B,E_H)$ such that $d_H(a)\ge 2$ for every $a\in A$, while $d_H(b)\ge 1$ for every $b\in B$.\end{observation}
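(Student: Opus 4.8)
The statement is an iff, so I will prove the two implications separately, and in each direction the natural object to pass to is the bipartite subgraph that records which vertices of $D$ each vertex "uses" for (2-)domination.

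For the forward direction, suppose $G$ is a $D\!D_2$-graph with $D\!D_2$-pair $(D,D_2)$. The plan is to build $H$ with $A = D_2$ and $B = D$; note $A$ and $B$ partition a subset of $V_G$, and any vertex not in $D\cup D_2$ can be appended to $B$ (or handled separately), so without loss of generality $A\cup B = V_G$. Now put an edge of $H$ between $a\in A$ and $b\in B$ exactly when $ab\in E_G$. Since $D_2$ is $2$-dominating, every vertex of $A\setminus D = A$ that lies outside $D$ — which is all of $A$, as $A$ and $B$ are disjoint — has at least two neighbors in $D=B$, giving $d_H(a)\ge 2$. Since $D$ is dominating, every vertex of $B$ outside $D_2$ has at least one neighbor in $D_2=A$; the only subtlety is a vertex $b\in B$ with $b\in D$, which needs no neighbor in $D$ for domination, so I must be slightly careful and instead observe that every $b\in B\subseteq D\cup(V_G\setminus(D\cup D_2))$ still has, by domination of $D$ together with $2$-domination of $D_2$, at least one neighbor outside $B$; a cleaner route is to start from $A=D_2$, $B=V_G\setminus D_2$ and delete from $H$ all edges inside $B$, then verify $d_H(b)\ge 1$ for $b\in B$ using that $D\subseteq B$ dominates and $B\setminus D$ is $2$-dominated. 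I expect this bookkeeping around vertices lying in neither $D$ nor $D_2$ to be the only place requiring attention.

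For the reverse direction, suppose such a spanning bipartite subgraph $H=(A,B,E_H)$ exists. The plan is to set $D_2 := A$ and $D := B$ and check the $D\!D_2$-pair conditions in $G$ (not merely in $H$, but edges of $H$ are edges of $G$, so this only helps). Every vertex of $V_G\setminus D_2 = B$ has $d_H(b)\ge 1$, hence a neighbor in $A\subseteq$ wait — a neighbor in $B$? No: in a bipartite subgraph with parts $A,B$, all $H$-edges go between $A$ and $B$, so a neighbor of $b\in B$ in $H$ lies in $A = V_G\setminus D$, which is exactly what is needed for $D = B$ to be dominating. Likewise every vertex of $V_G\setminus D = A$ has $d_H(a)\ge 2$ $H$-neighbors, all in $B = D_2$, so $D_2 = A$ is $2$-dominating. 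Finally $D\cap D_2 = B\cap A = \emptyset$, so $(D,D_2)=(B,A)$ is a $D\!D_2$-pair and $G$ is a $D\!D_2$-graph.

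The argument is essentially a reformulation, so there is no deep obstacle; the one point to get exactly right is the handling, in the forward direction, of vertices belonging to neither $D$ nor $D_2$ — they may be placed in $B$, and one must confirm the degree condition $d_H(b)\ge 1$ survives the deletion of edges internal to $B$, which it does because such a vertex is $2$-dominated by $D_2 = A$ and so retains at least two $H$-neighbors.
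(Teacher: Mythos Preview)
Your assignment of parts is swapped throughout, and this is not merely a labeling issue---it makes the forward construction fail. You take $A=D_2$ and (after appending the leftover vertices) $B=V_G\setminus D_2$, and then assert that ``since $D_2$ is $2$-dominating, every vertex of $A$ has at least two neighbors in $D=B$.'' But $2$-domination by $D_2$ says that vertices \emph{outside} $D_2$ have two neighbors \emph{in} $D_2$; it says nothing about vertices of $D_2$ having neighbors outside. Concretely, take $G=P_3$ with $D=\{2\}$, $D_2=\{1,3\}$: your $H$ has $A=\{1,3\}$, $B=\{2\}$, and $d_H(1)=1<2$. The same inversion appears in your reverse direction: from $d_H(b)\ge 1$ for $b\in B$ you conclude ``$D=B$ is dominating,'' but what you have actually shown is that every vertex of $B$ has a neighbor in $A$, i.e.\ that $A$ dominates; similarly your check ``$d_H(a)\ge 2$ so $D_2=A$ is $2$-dominating'' in fact shows $B$ is $2$-dominating. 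The correct pair is $(D,D_2)=(A,B)$, not $(B,A)$.

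The paper's proof avoids all of this bookkeeping (and your worry about vertices in neither $D$ nor $D_2$) by the clean choice $A=D$, $B=V_G-D$, $E_H=E_G(D,V_G-D)$: then $d_H(a)\ge 2$ for $a\in A=D$ because $D\subseteq V_G-D_2$ is $2$-dominated by $D_2\subseteq B$, and $d_H(b)\ge 1$ for $b\in B=V_G-D$ because $D$ dominates. Swap your $A$ and $B$ (equivalently, set $A=D$ rather than $A=D_2$) and your plan becomes correct and essentially identical to the paper's.
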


\begin{proof}  Let $(D,D_2)$ be a $D\!D_2$-pair in $G$. Then the bipartite graph $H=(A,B,E_H)$, where $A=D$, $B=V_G-D$, and $E_H= E_G(D,V_G-D)$ is the set of edges that join a~vertex of $D$ and a vertex of $V_G-D$, is the desired spanning subgraph of $G$.

On the other hand if $H=(A,B,E_H)$ is a bipartite spanning subgraph of $G$ such that $d_H(a)\ge 2$ for every $a\in A$, and $d_H(b)\ge 1$ for every $b\in B$, then $(A,B)$ is a~$D\!D_2$-pair in $H$, and, therefore, in $G$.\end{proof}

From Observation~\ref{observ-2} (or directly from the definition of a $D\!D_2$-graph) we immediately have the following corollary.

\begin{cor}\label{cor:supergraph}
Every spanning supergraph of a $D\!D_2$-graph is a $D\!D_2$-graph.
\end{cor}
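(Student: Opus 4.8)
The plan is to argue directly from the definition of a $D\!D_2$-pair, exploiting the fact that both domination and $2$-domination are monotone under the addition of edges. Let $G$ be a $D\!D_2$-graph and let $G'$ be a spanning supergraph of $G$, so that $V_{G'}=V_G$ and $E_G\subseteq E_{G'}$; in particular $N_G(v)\subseteq N_{G'}(v)$ for every vertex $v$. Fix a $D\!D_2$-pair $(D,D_2)$ in $G$. Since the vertex set does not change, $D$ and $D_2$ are still disjoint subsets of $V_{G'}$, and the partition into $D$, $D_2$ and the remaining vertices is literally unchanged; this is the only point where the word \emph{spanning} is used.

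Next I would verify the two domination conditions in $G'$. For any $v\in V_{G'}-D=V_G-D$, the vertex $v$ has a neighbor in $D$ in the graph $G$, hence a neighbor in $D$ in $G'$ because $N_G(v)\subseteq N_{G'}(v)$; thus $D$ is a dominating set of $G'$. Likewise, for any $v\in V_{G'}-D_2=V_G-D_2$ we have $|N_G(v)\cap D_2|\ge 2$, and therefore $|N_{G'}(v)\cap D_2|\ge 2$, so $D_2$ is a $2$-dominating set of $G'$. Consequently $(D,D_2)$ is a $D\!D_2$-pair in $G'$, and $G'$ is a $D\!D_2$-graph.

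Alternatively, one can phrase the same observation through Observation~\ref{observ-2}: if $G$ is a $D\!D_2$-graph, it admits a spanning bipartite subgraph $H=(A,B,E_H)$ with $d_H(a)\ge 2$ for all $a\in A$ and $d_H(b)\ge 1$ for all $b\in B$; since $V_{G'}=V_G$ and $E_G\subseteq E_{G'}$, the very same $H$ is a spanning bipartite subgraph of $G'$ with the required degree bounds, so $G'$ is a $D\!D_2$-graph by Observation~\ref{observ-2}.

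I do not anticipate a genuine obstacle here: the statement is immediate once one records that enlarging the edge set can only increase neighborhoods, so neither domination nor $2$-domination can be destroyed. The only thing worth stating explicitly is that "spanning" guarantees the vertex set is preserved, so the witnessing sets $D$ and $D_2$ transfer verbatim.
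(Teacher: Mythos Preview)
Your proof is correct and matches the paper's treatment exactly: the paper states that the corollary follows immediately from Observation~\ref{observ-2} (or directly from the definition of a $D\!D_2$-graph), which are precisely the two arguments you give. There is nothing to add.
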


Before we state and prove our key characterization of $D\!D_2$-graphs we need some more terminology concerning dominating sets. A~dominating set $D$ of a graph $G$ is said to be {\em certified} if every vertex in $D$ has either zero or at least two neighbors in $V_G - D$ (see~\cite{nasz-AKCE,nasz-OPU}). A vertex $v$ of a graph $G$ is said to be {\em shadowed} with respect to a certified dominating set $D$ if $N_G[v]\subseteq D$. In the next theorem we prove that the $D\!D_2$-graphs are precisely the graphs having a certified dominating set with no shadowed vertex. We also prove that a graph $G$ is a $D\!D_2$-graph if and only if the neighborhood of each weak support of $G$ contains a vertex which is neither a leaf nor a~support vertex.

\begin{thm}\label{first-characterization}
Let G be a graph with no isolated vertex. Then the following three  properties are equivalent:
\begin{itemize}
\item[$(1)$] $G$ has a certified dominating set with no shadowed vertex.
\item[$(2)$] $G$ is a $D\!D_2$-graph.
\item[$(3)$] $N_G(s)-(L_G\cup S_G)\not=\emptyset$ for every weak support $s$ of $G$.
\end{itemize}
\end{thm}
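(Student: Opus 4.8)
The plan is to prove the cycle of implications $(1)\Rightarrow(2)\Rightarrow(3)\Rightarrow(1)$, using Observation~\ref{observ-2} as the workhorse for the middle step.

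For $(1)\Rightarrow(2)$, suppose $D$ is a certified dominating set of $G$ with no shadowed vertex. I would form the bipartite spanning subgraph $H=(A,B,E_H)$ with $A=V_G-D$, $B=D$, and $E_H=E_G(D,V_G-D)$, and check the degree conditions of Observation~\ref{observ-2}. Every $a\in A=V_G-D$ has at least two neighbors in $D$? No --- domination only guarantees one. So instead I should be more careful: the ``certified'' condition says each vertex of $D$ has zero or at least two neighbors outside $D$, and ``no shadowed vertex'' means no $v$ has $N_G[v]\subseteq D$, i.e.\ every vertex of $D$ has at least one neighbor outside $D$, hence in fact at least two. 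So I should take $A=D$ and $B=V_G-D$: then $d_H(a)\ge 2$ for $a\in D$ by the certified-plus-unshadowed property, and $d_H(b)\ge 1$ for $b\in V_G-D$ because $D$ is dominating and $G$ has no isolated vertex. Observation~\ref{observ-2} then gives that $G$ is a $D\!D_2$-graph. (In the reverse direction, given a $D\!D_2$-pair $(D,D_2)$ one recovers a certified dominating set $D_2$ --- actually it is cleaner to note the equivalence $(1)\Leftrightarrow(2)$ is essentially a restatement of Observation~\ref{observ-2} with $A$ and $B$ interpreted as $D$ and its complement, so I would present $(1)\Rightarrow(2)$ and fold the converse into the proof of $(3)\Rightarrow(1)$ below, or simply remark it.)

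For $(2)\Rightarrow(3)$, let $(D,D_2)$ be a $D\!D_2$-pair and let $s$ be a weak support, with leaf neighbor $\ell$. By Observation~\ref{observ-1}, $\ell\in D_2$ and $s\in D$. Since $D_2$ is $2$-dominating and $\ell\in D_2$ has only the neighbor $s$, but $2$-domination only constrains vertices \emph{outside} $D_2$, the real pressure is on $s$ if $s\notin D_2$ --- and indeed $s\in D$, so $s\notin D_2$, hence $s$ has at least two neighbors in $D_2$. One of these is $\ell$; let $v$ be another neighbor of $s$ in $D_2$. Since $s$ is a \emph{weak} support, $\ell$ is its only leaf neighbor, so $v$ is not a leaf; and $v\in D_2$ is disjoint from $D\supseteq S_G$, so $v$ is not a support. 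Thus $v\in N_G(s)-(L_G\cup S_G)$, proving $(3)$.

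For $(3)\Rightarrow(1)$, I would explicitly build a certified dominating set $D$ with no shadowed vertex. Start by putting all supports into $D$ and all leaves outside $D$; this is forced. The interior (non-leaf, non-support) vertices can be distributed, and the key point is that the hypothesis $(3)$ lets us handle the weak supports: for each weak support $s$ we may route its unique leaf $\ell$ and the guaranteed interior neighbor $v\in N_G(s)-(L_G\cup S_G)$ to the complement of $D$, giving $s$ two neighbors outside $D$. Strong supports automatically have $\ge 2$ leaf neighbors outside $D$. For the remaining (interior) vertices one needs to assign them to $D$ or its complement so that: every vertex outside $D$ has a neighbor in $D$ (domination), and every vertex in $D$ has zero or $\ge 2$ neighbors outside $D$ (certified, and the ``no shadowed vertex'' part rules out the ``zero'' alternative from ever being the only option forced by isolation). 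The cleanest route is actually to use Observation~\ref{observ-2} backwards: condition $(3)$ should be shown to imply the existence of the required spanning bipartite subgraph $H$, and then $(A,B)=(D_2$-side$,D$-side$)$ translates to a certified dominating set. I expect the main obstacle to be this last implication $(3)\Rightarrow(1)$ (equivalently $(3)\Rightarrow(2)$): one must argue that the \emph{only} local obstruction to a $D\!D_2$-pair is a weak support all of whose neighbors are leaves or supports, i.e.\ that away from such configurations a suitable global assignment always exists. I would handle this by a direct construction: set $D_2=L_G\cup\{$one chosen interior neighbor for each weak support$\}$ possibly enlarged, $D=S_G$ possibly enlarged by interior vertices, and verify the $H$-degree conditions componentwise, checking separately the vertices that are leaves, supports, chosen interior neighbors, and ``free'' interior vertices --- the free interior vertices of degree $\ge 2$ can be placed in $B=V_G-D$ since they already have $\ge 2$ neighbors, and those that must go into $D$ can be shown to retain $\ge 2$ neighbors outside by a short case analysis using that they are not leaves.
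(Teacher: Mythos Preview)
Your treatments of $(1)\Rightarrow(2)$ and $(2)\Rightarrow(3)$ are correct and match the paper's (the paper argues directly that $(D,V_G-D)$ is a $D\!D_2$-pair rather than going through Observation~\ref{observ-2}, but this is cosmetic).

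The gap is in $(3)\Rightarrow(1)$. You correctly identify it as the hard implication and correctly force $S_G\subseteq D$, $L_G\subseteq V_G-D$, but the phrase ``possibly enlarged'' is the entire problem, and your sketch does not resolve it. Your sentence ``the free interior vertices of degree $\ge 2$ can be placed in $B=V_G-D$ since they already have $\ge 2$ neighbors'' confuses the two degree requirements: membership in $B$ needs a neighbor in $A=D$, not two neighbors somewhere. If you take $D=S_G$, interior vertices far from any support are undominated; if you then add interior vertices to $D$ without a rule, two adjacent ones may both land in $D$ and one could fail the ``$\ge 2$ neighbors outside $D$'' test. The promised ``short case analysis using that they are not leaves'' does not handle this global coordination.

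The paper's device is to let $I$ be a \emph{maximal independent} subset of $V_G-N_G[S_G]$ and take $D=I\cup S_G$. Maximality of $I$ plus the definition of $N_G[S_G]$ give domination. For the certified/no-shadow part: each $x\in I$ lies outside $N_G[S_G]$, hence is not a leaf, so $d_G(x)\ge 2$, and by independence of $I$ together with $I\cap N_G(S_G)=\emptyset$ all of its neighbors lie outside $I\cup S_G$; a strong support has two leaf neighbors outside $D$; a weak support $s$ has its leaf plus, by $(3)$, a neighbor $v\in N_G(s)-(L_G\cup S_G)$, and since $v$ is adjacent to the support $s$ it lies in $N_G[S_G]$, hence $v\notin I$, while $v\notin S_G$ by choice. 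The independence of $I$ is precisely what prevents the bad adjacency your sketch leaves open.
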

\begin{proof}
The implication $(1) \Rightarrow (2)$ was already observed in \cite{nasz-AKCE}. However, for clarity, we repeat the arguments. Let $D$ be a certified dominating set of $G$ and assume that no element of $D$ is shadowed. Then $D$ is a dominating set of $G$ and $|N_G(x)\cap (V_G-D)|\ge 2$ for every $x\in D$. Consequently, the sets $D$ and $V_G-D$ form a $D\!D_2$-pair in $G$, and so $G$ is a $D\!D_2$-graph.

Assume now that $(D,D_2)$ is a $D\!D_2$-pair of $G$, and let $s$ be a weak support of~$G$. Then $L_G\subseteq D_2$ and $S_G\subseteq D$ (by Observation \ref{observ-1}), and $s\in S_G'\subseteq S_G\subseteq D\subseteq V_G-D_2$. Consequently $N_G(s)\cap (D_2-L_G)\not=\emptyset$ (since $|N_G(s)\cap L_G|=1$ and $|N_G(s)\cap D_2|\ge 2$). This implies that the set $N_G(s)-(L_G\cup S_G)$  is nonempty (as its subset $N_G(x)\cap (D_2-L_G)$ is nonempty) and this establishes the implication $(2) \Rightarrow (3)$.

Finally, assume that  $N_G(s)-(L_G\cup S_G) \not=\emptyset$ for every weak support $s$ of $G$. Let $I$ be a maximal independent subset of $V_G-N_G[S_G]$ in $G$. We claim that $I\cup S_G$ is the desired certified dominating set with no shadowed vertex in $G$. The choice of $I$ and the definition of $N_G[S_G]$ imply that $I\cup S_G$ is a dominating set in $G$. Thus it remains to show that $|N_G(x)-(I\cup S_G)|\ge 2$ for $x\in I\cup S_G$. This is obvious if $x\in I$ or if $x$ is a strong support. If $x$ is a weak support, then also $|N_G(x)-(I\cup S_G)|\ge 2$, as in this case $x$ is adjacent to exactly one leaf (and $L_G\subseteq V_G-(I\cup S_G)$) and, by the assumption, it has another neighbor in $N_G[S_G]-(L_G\cup S_G) \subseteq V_G-(I\cup S_G)$. This completes the proof of the implication $(3) \Rightarrow (1)$. \end{proof}

A graph with minimum degree at least two has no (weak) support vertex and therefore the next corollary is obvious from Theorem~\ref{first-characterization}.

\begin{cor} \label{wniosek-Henning-Rall} {\rm \cite{HR13}}.
Every graph with minimum degree at least two is a  $D\!D_2$-graph.
\end{cor}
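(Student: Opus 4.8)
The plan is to derive Corollary~\ref{wniosek-Henning-Rall} as an immediate consequence of Theorem~\ref{first-characterization}, using the equivalence of properties $(2)$ and $(3)$. Let $G$ be a graph with $\delta(G)\ge 2$. The key observation is that such a graph has no leaf: a leaf is by definition a vertex of degree one, which cannot exist when every vertex has degree at least two. Since a support vertex is defined as the unique neighbor of a leaf, the absence of leaves means $G$ has no support vertices at all, and in particular no weak support vertex.

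With this in hand, property $(3)$ of Theorem~\ref{first-characterization} holds vacuously: the condition ``$N_G(s)-(L_G\cup S_G)\not=\emptyset$ for every weak support $s$ of $G$'' imposes no constraint whatsoever, because the set $S'_G$ of weak supports is empty. Hence $(3)$ is satisfied, and by the equivalence $(3)\Leftrightarrow(2)$ established in the theorem, $G$ is a $D\!D_2$-graph. (One should also note the hypothesis of Theorem~\ref{first-characterization} that $G$ has no isolated vertex is met, since $\delta(G)\ge 2 > 0$.)

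There is essentially no obstacle here; the only thing to be careful about is to articulate clearly \emph{why} minimum degree at least two forces the vacuity — namely, tracing the definitions leaf $\to$ support $\to$ weak support — rather than leaving it as ``obvious.'' An alternative, equally short route avoiding the characterization theorem would be to invoke Observation~\ref{observ-2} directly: take $H=G$ itself with an arbitrary bipartition $(A,B)$ of $V_G$ into two dominating sets (which exists by Ore's result cited in the introduction, applied since $G$ has no isolated vertex), but this does not immediately give the degree bounds $d_H(a)\ge 2$; so the cleanest proof really is the vacuous-hypothesis argument via Theorem~\ref{first-characterization}$(3)$. I would present the one-line proof: $G$ has no leaves, hence no supports, hence no weak supports, so $(3)$ holds vacuously and $G$ is a $D\!D_2$-graph by Theorem~\ref{first-characterization}.
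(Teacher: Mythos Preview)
Your proof is correct and follows exactly the paper's approach: the paper simply remarks that a graph with minimum degree at least two has no (weak) support vertex, so the corollary is immediate from Theorem~\ref{first-characterization}. Your write-up is more detailed (and the side discussion of the alternative via Observation~\ref{observ-2} is unnecessary), but the mathematical content is identical.
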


\begin{remark} \label{remark1} \rm By Theorem \ref{first-characterization}, a necessary and sufficient condition for a graph %(with minimum degree one)
to be a $D\!D_2$-graph is that each weak support has a neighbor which is neither a leaf nor a~support vertex. Thus, since identifying all leaves and support vertices in a graph of order $n$ and size $m$ can be done in $O(n+m)$ time, {\em the problem of recognizing whether a given graph  is a $D\!D_2$-graph can be solved in linear time}.
\end{remark}

A connected graph $G$ is said to be a {\em minimal $D\!D_2$-graph}, if $G$ is a $D\!D_2$-graph and no proper spanning subgraph of $G$ is a $D\!D_2$-graph. We say that a disconnected graph $G$ is a {\em minimal $D\!D_2$-graph} if every connected component of $G$ is a minimal $D\!D_2$-graph. A multigraph $H$ is called a {\it corona graph} if every vertex of $H$ is a leaf or it is adjacent to a leaf of $H$. The {\em subdivision graph $S(H)$} of a multigraph $H$ is the graph obtained from $H$ by inserting a new vertex onto each edge of $H$. If $e$ is an edge of $H$, then by $n_e$ we denote the vertex inserted onto $e$ in $S(H)$. We describe the structure of minimal $D\!D_2$-graphs in the following theorem.

\begin{thm}\label{thm:DD2-minimal}
A connected graph $G$ is a minimal $D\!D_2$-graph if and only if $G$ is a~star $K_{1,n}$ $(n\ge 2)$, a cycle $C_4$, or $G$ is the subdivision graph of a corona graph, that is, $G=S(H)$ for some connected corona multigraph $H$.
\end{thm}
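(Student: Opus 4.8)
The plan is to prove the equivalence in two directions, using Observation~\ref{observ-2}, Corollary~\ref{cor:supergraph} and Theorem~\ref{first-characterization} as the main tools, and using minimality in the equivalent form ``$G$ is a $DD_2$-graph and $G-e$ is not a $DD_2$-graph for every edge $e$'' (legitimate by Corollary~\ref{cor:supergraph}).

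For the ``if'' direction I would check the three families directly. For $K_{1,n}$ with $n\ge2$: its centre is a strong support and there is no weak support, so Theorem~\ref{first-characterization}(3) gives that it is a $DD_2$-graph, and deleting any edge isolates a leaf, so it is minimal. For $C_4$: it is a $DD_2$-graph by Corollary~\ref{wniosek-Henning-Rall}, and $C_4-e=P_4$ is not, so it is minimal. For $G=S(H)$ with $H$ a connected corona multigraph (the case $H=K_2$ yields $P_3=K_{1,2}$, already covered, so assume $H\ne K_2$): since subdivision vertices have degree~$2$ we have $L_{S(H)}=L_H$, the supports of $S(H)$ are exactly the subdivision vertices of pendant edges of $H$, and each such vertex is a weak support whose non-leaf neighbour (the $H$-support of the corresponding leaf) is neither a leaf nor a support of $S(H)$; hence $S(H)$ is a $DD_2$-graph by Theorem~\ref{first-characterization}(3). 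For its minimality I would take any spanning bipartite subgraph $(A',B',E')$ of $S(H)$ with $d_{E'}(a')\ge2$ on $A'$ and $d_{E'}(b')\ge1$ on $B'$: each leaf of $H$ must lie in $B'$ and keep its edge, which forces the adjacent pendant subdivision vertex into $A'$ together with both its edges, which forces the corresponding $H$-support into $B'$; the corona hypothesis then gives $V_H\subseteq B'$, and hence every subdivision vertex (all of whose neighbours lie in $B'$) must lie in $A'$ and keep both its edges, so $E'=E_{S(H)}$ and no edge is removable.

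For the ``only if'' direction, let $G$ be a connected minimal $DD_2$-graph. By Observation~\ref{observ-2} it has a spanning bipartite $DD_2$-subgraph, which minimality forces to equal $G$; so $G$ is bipartite with parts $A,B$, $d_G(a)\ge2$ for all $a\in A$, $d_G(b)\ge1$ for all $b\in B$. If some $a\in A$ has $d_G(a)\ge3$, then each neighbour of $a$ must be a leaf (otherwise removing the edge to a neighbour of degree $\ge2$ keeps $(A,B)$ a $DD_2$-pair), so $G=K_{1,d_G(a)}$ by connectedness. Otherwise every $a\in A$ has degree exactly~$2$, and letting $H$ be the loopless multigraph on $B$ with one edge between the two neighbours of each $a\in A$, we get $G=S(H)$ with $H$ connected and with $A$ the set of subdivision vertices. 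It then remains to show that $H$ is a corona multigraph unless $G=C_4$. Assume $H$ is not corona and let $U\ne\emptyset$ be the set of vertices of $H$ that are neither leaves nor supports; all $H$-neighbours of a $U$-vertex are non-leaves. If some $u\in U$ is adjacent to a support $s$ (with $s$ adjacent to a leaf $\ell$), I would delete the edge $s\,n_{us}$ from $S(H)$ and verify through Theorem~\ref{first-characterization}(3) that a $DD_2$-graph remains (the only new leaf $n_{us}$ has support $u$, whose other neighbours are degree-two subdivision vertices joining $u$ to non-leaves; the remaining weak supports survive with a neighbour outside $L\cup S$), contradicting minimality. So no $U$-vertex has a support neighbour; as $H$ is connected this forces $U=V_H$, hence $\delta(H)\ge2$. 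Now: if $H$ is the double edge, $G=C_4$; if $H$ is a cycle $C_k$ with $k\ge3$, then $G=C_{2k}$ and $C_{2k}-e=P_{2k}$ is a $DD_2$-graph, a contradiction; and if $H$ has a vertex $w$ of degree $\ge3$, I would delete a suitable subdivision-edge of $S(H)$ -- an edge $w'\,n_{ww'}$ when some neighbour $w'$ of $w$ has a neighbour $\ne w$, and otherwise (so $H$ is a ``book'' of multiple edges at $w$) an edge $w\,n_f$ at $w$ -- and again check via Theorem~\ref{first-characterization}(3) that a $DD_2$-graph remains, contradicting minimality.

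I expect the main obstacle to be this last step of the ``only if'' direction: showing that whenever $H$ is non-corona and is not the double edge, some single edge of $S(H)$ can be removed without destroying the $DD_2$-property. The difficulty is that parallel edges and vertices of degree exactly~$2$ create cascades -- deleting one edge can turn several further vertices into leaves, each generating a new support -- so the edge to delete has to be chosen with care and the verification via Theorem~\ref{first-characterization}(3) carried out case by case.
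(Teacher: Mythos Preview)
Your proof is correct. The ``if'' direction is essentially the paper's argument, rephrased through the bipartition $(A',B')$ rather than through an arbitrary spanning $DD_2$-subgraph~$F$.

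The ``only if'' direction, however, follows a genuinely different route. The paper, once it has $G=S(H)$ with all $A$-vertices of degree~$2$, assumes $H$ is not corona and picks $x\in V_H-(L_H\cup S_H)$; it then splits into two subcases depending on whether every $H$-neighbour of $x$ has a neighbour other than $x$. In the first subcase it deletes \emph{all} edges of the form $y\,t$ with $y\in N_H(x)$ and $t\in A_{xy}$ simultaneously, disconnecting $G$ into a star $G_1=G[N_G[x]]$ and a remainder $G_2$, each of which is easily seen to be a $DD_2$-graph via Observation~\ref{observ-2}. In the second subcase (where some $y$ has $N_H(y)=\{x\}$, forcing parallel edges) it argues similarly. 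Your approach instead always deletes a \emph{single} edge and verifies Theorem~\ref{first-characterization}(3) directly in the resulting graph, which requires your finer case split (a $U$-vertex adjacent to a support; else $U=V_H$ and $H$ is a double edge, a longer cycle, or has a vertex of degree~$\ge 3$, with a further ``book'' subcase). The paper's wholesale deletion keeps the verification short---each piece is checked via the degree criterion of Observation~\ref{observ-2} rather than by tracking new leaves and supports---whereas your single-edge deletions stay closer to the literal definition of minimality but demand the careful cascade analysis you flag as the main obstacle. Both arguments are sound; the paper's is more economical, yours is more direct.
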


\begin{proof}  It is easy to check that  $K_{1,n}$ ($n\ge 2$) and $C_4$ are minimal $D\!D_2$-graphs. Now let $H$ be a corona multigraph. It is immediate from Theorem \ref{first-characterization} that $S(H)$ is a~$D\!D_2$-graph. Let $F$ be a $D\!D_2$-graph which is a spanning subgraph of $S(H)$. To prove the minimality of $S(H)$ it suffices to show that every edge of $S(H)$ is in $F$. First, if $vu$ is a pendant edge in $H$, then, since no component of $F$ is of order 1 or 2, each of the edges $vn_{vu}$ and $un_{vu}$ is in $F$. Thus assume that $vu$ is an edge in $H$ and no one of the vertices $v$ and $u$ is a leaf in $H$. Then $v$ and $u$ are supports in $H$, and, therefore, there exist vertices $v'$ and $u'$ such that $vv'$ and $uu'$ are pendant edges in $H$. Since $n_{vu}$ is not an isolated vertex in $F$, at least one of the edges $vn_{vu}$ and $un_{vu}$ is in $F$. It remains to prove that the case where exactly one of the edges $vn_{vu}$ and $un_{vu}$ is in $F$ is impossible. Without loss generality assume that only $un_{vu}$ is in $F$. Then $n_{vu}$ is a leaf in $F$, $u$ is a support in $F$, but now $N_F(n_{uu'}) \subseteq L_F\cup S_F$, and it follows from Theorem~\ref{first-characterization} that $F$ is not a $D\!D_2$-graph. This contradiction proves  the minimality of $S(H)$.

Assume now that $G$ is a connected minimal $D\!D_2$-graph. Then, by Observation \ref{observ-2}, $G$ is a bipartite graph, say $G=(A,B,E_G)$, and without loss of generality we may assume that $d_G(a)\ge 2$ for every $a\in A$, and $d_G(b)\ge 1$ for every $b\in B$. We now consider the following two cases.

%\\[2mm]
{\em Case $1$}: $d_G(a)\ge 3$ for some $a\in A$. If $d_G(b)=1$ for every $b\in N_G(a)$, then $G$ is a~star, $G=K_{1,n}$ ($n=d_G(a)$). It remains to observe that the case $d_G(a)\ge 3$ (for some $a\in A$) and $d_G(b)\ge 2$ for some $b\in N_G(a)$ is impossible, as otherwise it immediately follows from Observation \ref{observ-2} that the proper spanning subgraph $G'=G-ab$ of $G$ would be a~$D\!D_2$-graph, contradicting the minimality of $G$.

%\\[2mm]
{\em Case $2$}: $d_G(a) =2$ for every $a\in A$. Then the set $A$ can be divided into the sets $A_{xy}= N_G(x)\cap N_G(y)$, where $x, y \in B$ and the distance $d_G(x,y)=2$. Now it is straightforward to observe that $G$ is the subdivision graph of the multigraph $H=(V_H, E_H)$ in which $V_H=B$ and there is a one-to-one correspondence between the edges joining vertices $x$ and $y$ in $H$ and the elements of the set $A_{xy}$ in $G$.  If $|A|=1$, then $G=K_{1,2}$ and it is the subdivision graph of a corona graph, as $K_{1,2}= S(P_2)= S(K_1\circ K_1)$. If $|A|=2$, then it follows from the connectivity of $G$ that $G=C_4$ or $G=P_5$ and the last graph is the subdivision graph of a corona graph, as $P_5=S(P_3)= S(K_1\circ 2\,K_1)$. Thus assume that $|A|\ge 3$. Now it remains to prove that $H$ is a corona graph. Suppose to the contrary that $V_H-(L_H \cup S_H) \not= \emptyset$. We consider two subcases.

%\\[2mm]
{\em Subcase $2.1$}. Assume first that there exists $x\in V_H-(L_H\cup S_H)$ such that $N_H(y)-\{x\}\not= \emptyset$ for every $y\in N_H(x)$, in other words, $d_G(y)-|A_{xy}| \ge 1$ for every $y\in N_H(x)$. In this case $d_H(x)>1$ (as $x$ is not a leaf) and $G'= G-\bigcup_{y\in N_H(x)}\{yt\colon t\in A_{xy}\}$ is a proper spanning subgraph of $G$. This graph consists of two vertex-disjoint graphs $G_1=G[N_G[x]]$ and $G_2= G-N_H[x]$.  The fact that $G_1$ is a star of order at least~$3$ implies that $G_1$ is a $D\!D_2$-graph, while the fact that $G_2$ is a $D\!D_2$-graph follows from Observation~\ref{observ-2} and from the choice of $x$ (implying that $d_{G_2}(a)=d_G(a)=2$ for $a\in A\cap V_{G_2}$,  and $d_{G_2}(b) = d_G(b)-|A_{xb}|\ge 1$ for $b \in V_{G_2} \cap B$). Consequently,  $G'=G_1\cup G_2$ is a $D\!D_2$-graph, which contradicts the minimality of $G$.

%\\[2mm]
{\em Subcase $2.2$}. Assume now that for every $x\in V_H-(L_H\cup S_H)$  there exists $y\in N_H(x)$ such that $N_H(y)=\{x\}$. Take any $x_0\in V_H-(L_H\cup S_H)$  and $y_0\in N_H(x)$ such that $N_H(y_0)=\{x_0\}$. In this case there is a multiple edge between $x_0$ and $y_0$. If $N_H(x_0)=\{y_0\}$, then $G=K_{2,n}$ ($n=|A|\ge 3$ is the number of edges joining $x_0$ and $y_0$) and it is a non-minimal $D\!D_2$-graph.
If $N_H(x_0)\not=\{y_0\}$, then the same arguments as in Subcase 2.1 prove that the proper spanning subgraph $G'=G-\{x_0t\colon t\in A_{x_0y_0}\}$ of $G$ is a non-minimal $D\!D_2$-graph. This completes the proof.
\end{proof}

Fig.~\ref{minimal-DD2-graphs} shows a disconnected minimal $D\!D_2$-graph that consists of three types of connected minimal $D\!D_2$-graphs. The last one is the subdivision graph of a corona graph, and one can observe that the main properties of such graphs may be rephrased as in the next observation, which we present without any proof.

\begin{figure}[!h]\begin{center}  \special{em:linewidth 0.4pt} \unitlength 0.3ex \linethickness{0.4pt}
\begin{picture}(220,55)\put(0,0){\multiput(0,10)(0,30){2}{\circle*{2}}
\multiput(30,10)(0,30){2}{\circle*{2}}\path(0,10)(30,10)(30,40)(0,40)(0,10)}
\put(65,0){\multiput(3.91,39.74)(-7.82,0){2}{\circle*{2}}
\path(-3.91,39.74)(0,10)(3.91,39.74)\put(0,10){\circle*{2}}
\multiput(11.5,37.7)(-23,0){2}{\circle*{2}}\path(11.5,37.7)(0,10)(-11.5,37.7)}
\put(100,0){\multiput(5,10)(0,30){2}{\circle*{2}}
\multiput(40,10)(-7.5,30){2}{\circle*{2}}\multiput(40,10)(7.5,30){2}{\circle*{2}}
\multiput(60,25)(0,30){2}{\circle*{2}}\multiput(80,10)(0,30){2}{\circle*{2}}
\multiput(110,10)(0,30){2}{\circle*{2}}\multiput(110,10)(-10,27){2}{\circle*{2}}
\multiput(110,10)(10,27){2}{\circle*{2}}\bezier{200}(5,10)(22.5,20)(40,10)
\bezier{200}(5,10)(22.5,0)(40,10)\path(60,55)(60,25)(40,10)(110,10)
\path(60,25)(80,10)(80,40)\path(5,10)(5,40)\path(32.5,40)(40,10)(47.5,40)
\path(110,10)(110,40)\path(100,37)(110,10)(120,37)\put(5,25){\whiten\circle{2}}
\put(36.25,25){\whiten\circle{2}}\put(43.75,25){\whiten\circle{2}}
\put(22.5,5.0){\whiten\circle{2}}\put(22.5,15.0){\whiten\circle{2}}
\multiput(60,10)(0,30){2}{\whiten\circle{2}}
\multiput(50,17.5)(20,0){2}{\whiten\circle{2}}
\multiput(105,23.5)(10,0){2}{\whiten\circle{2}}
\put(80,25.0){\whiten\circle{2}}\put(110,25.0){\whiten\circle{2}}
\put(95,10){\whiten\circle{2}}
}\end{picture}\caption{A disconnected minimal ${\cal D}\!{\cal D}_2$-graph.} \label{minimal-DD2-graphs}\end{center}\end{figure}

\begin{observation}\label{obs:coco} A connected graph $G$ is the subdivision graph of a~corona graph if and only if $G$ is a bipartite graph, say $G=(A,B,E_G)$, such that $d_G(a)=2$ for every $a\in A$, while every  $b\in B$  is a leaf or it is at the distance two from some leaf of $G$.
\end{observation}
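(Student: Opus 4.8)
The plan is to prove the two implications by explicitly moving between a corona multigraph $H$ and its subdivision $S(H)$, using the bijection ``subdivision vertex $n_e$ $\leftrightarrow$ edge $e$ of $H$'' as the central device; throughout I regard multigraphs as loopless, so that the two ends of every edge of $H$ are distinct.

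For the necessity, suppose $G=S(H)$ with $H$ a connected corona multigraph. I would put $B:=V_H$ and $A:=\{n_e\colon e\in E_H\}$. Since every edge of $S(H)$ joins a vertex of $V_H$ to a subdivision vertex, $G$ is bipartite with parts $A$ and $B$, and plainly $d_G(n_e)=2$ for every $n_e\in A$. It then remains to read the condition on $B$ off from the corona property: if $b\in B$ is a leaf of $H$, with unique incident edge $e$, then in $G$ the vertex $b$ is adjacent only to $n_e$, so $b$ is a leaf of $G$; if instead $b$ is adjacent in $H$ to a leaf $\ell$ of $H$ via the edge $e$, then $\ell$ is a leaf of $G$ (its only $G$-neighbour is $n_e$), the vertices $b$ and $\ell$ lie in the same part $B$ and are therefore nonadjacent, and $b\,n_e\,\ell$ is a path, so $d_G(b,\ell)=2$. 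Since $H$ is corona, one of these two cases applies to every $b\in B$, which is exactly the asserted property.

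For the sufficiency, I would build $H$ from the given bipartite $G=(A,B,E_G)$ by taking $V_H:=B$ and, for each $a\in A$, adding an edge $e_a$ joining the two vertices of $N_G(a)$ -- these two vertices are distinct because $d_G(a)=2$ and $G$ is simple -- so that distinct vertices of $A$ with the same neighbourhood give parallel edges and $H$ is genuinely a multigraph. The assignment $b\mapsto b$ on $B$ and $a\mapsto n_{e_a}$ on $A$ is then an isomorphism from $G$ onto $S(H)$, since $n_{e_a}$ is adjacent in $S(H)$ precisely to the two ends of $e_a$, that is, to the two $G$-neighbours of $a$; and $H$ is connected because the map merely suppresses the degree-$2$ vertices of $G$, which cannot disconnect a connected graph. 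Finally, to see that $H$ is corona, take $b\in B$: if $b$ is a leaf of $G$, its unique neighbour $a\in A$ has a second neighbour $b'\in B$, and $e_a=bb'$ is the only edge of $H$ at $b$, so $b$ is a leaf of $H$; if $b$ is at distance $2$ in $G$ from a leaf $\ell$, then $\ell\in B$ by bipartiteness, the vertex $a$ on a $b$--$\ell$ path of length $2$ has $N_G(a)=\{b,\ell\}$, hence $e_a=b\ell$, and $\ell$ -- a leaf of $G$, hence a leaf of $H$ by the computation just made -- is a leaf of $H$ adjacent to $b$. In either case the corona condition holds at $b$, completing the argument.

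The proof is essentially bookkeeping, so I do not expect a genuine obstacle; the two points that need care are checking that $b\mapsto b$, $a\mapsto n_{e_a}$ really is a graph isomorphism (in particular, that no loops or unintended identifications of edges occur), and handling the degenerate instance $A=\emptyset$, which forces $G=K_1$: this is correctly excluded on both sides, since $K_1$ is not a corona multigraph and, consistently, the single vertex of $K_1$ is neither a leaf nor at distance $2$ from a leaf.
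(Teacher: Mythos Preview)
Your argument is correct. The paper explicitly states that this observation is ``present[ed] without any proof,'' so there is no authors' proof to compare against; your write-up supplies precisely the routine verification they omit, and the construction of $H$ you use in the sufficiency direction matches the one already appearing in Case~2 of the proof of Theorem~\ref{thm:DD2-minimal}.
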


\begin{remark} \label{remark2} {\rm It follows from Observation \ref{observ-2}, Corollary \ref{cor:supergraph}, and Theorem \ref{thm:DD2-minimal} that {\em a~graph $G$ is a ${\cal D}\!{\cal D}_2$-graph if and only if $G$ is a spanning supergraph of a minimal ${\cal D}\!{\cal D}_2$-graph}, that is, $G$ is a spanning supergraph of a graph in which every connected component is a cycle $C_4$, a star $K_{1,n}$ (with $n\ge 2$), or the subdivision graph of a~corona graph. In fact, it is a possible to show a little more: {\em If $G$ is a ${\cal D}\!{\cal D}_2$-graph and $C_4$ is not a connected component of $G$, then $G$ is a spanning supergraph of a graph in which every connected component is a star $K_{1,n}$ ($n\ge 2$) or the subdivision graph of a~corona graph.} Therefore, a tree $T$ is a ${\cal D}\!{\cal D}_2$-graph if and only if $T$ has a spanning forest $F$ in which every connected component is a star $K_{1,n}$ ($n\ge 2$) or the subdivision graph of a~corona tree. Such  trees were also constructively characterized in~\cite{HR13}, using vertex labeling. (Taking into account Observation~\ref{obs:coco}, we point out that if for every connected component of $F$, each of its leaves as well as each vertex at the distance two to a~leaf is assigned the label $B$, while any other vertex is assigned the label $A$, then the resulting vertex labeling of $F$ and so of $T$ either is the one discussed in~\cite{HR13}.)
}\end{remark}

\begin{remark} \label{remark3} {\rm It follows from Theorem \ref{thm:DD2-minimal} that recognizing connected minimal ${\cal D}\!{\cal D}_2$-graphs is relatively easy.
Let $G$ be a connected graph of order $n$ and size $m$. First we check whether $G = C_4$ or $G=K_{1,n}$ for some $n \ge 2$. If $G$ is neither $C_4$ nor $K_{1,n}$, then we check the bipartiteness of $G$ and ad hoc determine the degrees of the vertices in~$G$. If $G$ is a non-bipartite graph or $\delta(G) \ge 2$, then $G$ is not a minimal $D\!D_2$-graph (as it follows from  Theorem~\ref{thm:DD2-minimal}). Thus assume that $G$ is a bipartite graph and  $\delta(G) = 1$. Let $(A,B)$ be the bipartition of $G$. If both $A \cap L_G$ and $B \cap L_G$ are nonempty sets, then $G$ is not a $D\!D_2$-graph  (by Observation \ref{observ-2}). Therefore without loss of generality we may assume that  $L_G \subseteq B$. Now, since $G\not=K_{1,n}$, it must be $d_G(a)= 2$ for every $a \in A$ (as it was observed in Case 1 of the proof of Theorem \ref{thm:DD2-minimal}). By Observation~\ref{obs:coco}, it remains to check whether every vertex in $B-L_G$ is at the distance two from some leaf of $G$. Since each of the above steps can be done in $O(n+m)$ time, we conclude that {\em the problem of recognizing whether a given graph $G$ is a minimal $D\!D_2$-graph can be solved in linear time.}
}\end{remark}

\begin{remark} \label{remark4} {\rm If $G$ is a $D\!D_2$-graph, then let $\gamma\gamma_2(G)$ denote the integer $\min \{|D|+|D_2| : (D,D_2) \textrm{ is a $D\!D_2$-pair in $G$} \}$. This parameter has been defined and studied in \cite{HR13}. It is obvious that if $G$ is a~$D\!D_2$-graph, then $3\le \gamma\gamma_2(G)\le |V_G|$. It is also easy to observe that if $n$ and $k$ are integers such that $3\le k\le n$ and $G_{n,k}$ is one of the graphs  $K_1+\left((k-3)K_1\cup K_{n+2-k} \right)$ and  $K_1+\left((k-3)K_1\cup K_{2,n-k} \right)$, then $|V_{G_{n,k}}| =n$ and $\gamma\gamma_2({G_{n,k}})=k$. In addition, if $G$ is a~graph of order $n$, then
$\gamma\gamma_2(G)=3$ if and only if $K_1+K_{2,n-3}$ is a spanning subgraph of $G$. Henning and Rall in their paper~\cite{HR13}  studied graphs $G$ for which  $\gamma\gamma_2(G) =|V_G|$.  In particular, they observed that there are infinitely many graphs $G$ with the minimum degree two for which  $\gamma\gamma_2(G) =|V_G|$, and proved
that $G=K_{3,3}$ is the only graph with $\delta(G) \ge 3$ for which  $\gamma\gamma_2(G) =|V_G|$. A complete characterization of such graphs remains an open problem. Nevertheless, herein, by establishing the following theorem, we make a small step forward.

\begin{thm}
If $G$ is a minimal $D\!D_2$-graph, then $\gamma\gamma_2(G)=|V_G|$.
\end{thm}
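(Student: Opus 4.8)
The plan is to prove the sharper claim that \emph{every} $D\!D_2$-pair $(D,D_2)$ of a minimal $D\!D_2$-graph $G$ already covers all of $V_G$, i.e.\ $D\cup D_2=V_G$; since $D$ and $D_2$ are disjoint this forces $|D|+|D_2|=|V_G|$ for each such pair, and as $G$ (being a $D\!D_2$-graph) has at least one $D\!D_2$-pair, it follows that $\gamma\gamma_2(G)=|V_G|$.

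First I would reduce to connected graphs. For an arbitrary graph the restriction of a $D\!D_2$-pair to a connected component is a $D\!D_2$-pair of that component, and conversely a union of component $D\!D_2$-pairs is a $D\!D_2$-pair of the whole graph; hence $\gamma\gamma_2$ is additive over components. Since a disconnected minimal $D\!D_2$-graph is by definition a disjoint union of connected minimal $D\!D_2$-graphs, it suffices to treat the connected case. So let $G$ be connected and minimal and let $(D,D_2)$ be any $D\!D_2$-pair of $G$.

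Let $H$ be the spanning subgraph of $G$ whose edge set is $E_G(D,V_G-D)$, i.e.\ the edges of $G$ joining $D$ to $V_G-D$. Exactly as in the proof of Observation~\ref{observ-2}, $D$ still dominates $V_G-D$ in $H$ (every domination edge crosses the cut), and each $a\in D$ lies in $V_G-D_2$, hence has at least two neighbours in $D_2\subseteq V_G-D$, all of which survive in $H$; thus $d_H(b)\ge 1$ for every $b\in V_G-D$ and $d_H(a)\ge 2$ for every $a\in D$, so $(D,V_G-D)$ is a $D\!D_2$-pair of $H$ and $H$ is a $D\!D_2$-graph (in particular it has no isolated vertex). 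By minimality, $H$ is not a proper spanning subgraph of $G$, so $E_G=E_G(D,V_G-D)$: the graph $G$ is bipartite with parts $D$ and $V_G-D$ and has no edge inside either part. Finally, assume some vertex $w$ lies outside $D\cup D_2$. Then $w\in V_G-D$, and since $w\notin D_2$ while $D_2$ is $2$-dominating, $w$ has a neighbour $x\in D_2\subseteq V_G-D$; but then $wx$ is an edge inside the part $V_G-D$, contradicting the previous sentence. Hence $D\cup D_2=V_G$, as required.

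I do not expect a serious obstacle here. The only points needing care are the component reduction -- routine, once one observes that domination and $2$-domination are properties local to each component -- and the verification that $H$ is honestly a $D\!D_2$-graph, not merely a spanning bipartite subgraph with suitable degrees; this is immediate because $d_H\ge 1$ on $V_G-D$ and $d_H\ge 2$ on $D$, so $H$ has no isolated vertex and the minimality hypothesis genuinely applies to it. (Alternatively one could run through the classification in Theorem~\ref{thm:DD2-minimal}, checking $K_{1,n}$, $C_4$ and $S(H)$ separately, but the uniform argument above avoids the case analysis.)
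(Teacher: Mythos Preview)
Your proof is correct and takes a genuinely different route from the paper. The paper invokes the structural classification of Theorem~\ref{thm:DD2-minimal} and does a case analysis over $K_{1,n}$, $C_4$, and $S(H)$ for a corona multigraph $H$, explicitly determining the unique $D\!D_2$-pair in the subdivision case. You instead argue directly from the definition of minimality: given any $D\!D_2$-pair $(D,D_2)$, the bipartite ``cut'' subgraph on the edges $E_G(D,V_G-D)$ is itself a $D\!D_2$-graph (this is precisely Observation~\ref{observ-2}), so by minimality it must equal $G$; then any vertex outside $D\cup D_2$ would force an edge inside $V_G-D$, which is impossible. Your argument is shorter, avoids the case analysis, and does not depend on Theorem~\ref{thm:DD2-minimal} at all---in effect it shows the theorem follows already from Observation~\ref{observ-2} plus the bare definition of minimality. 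The paper's approach, on the other hand, yields the extra information that in the $S(H)$ case the $D\!D_2$-pair is unique and given explicitly by $(\{n_{xy}:xy\in E_H\},V_H)$.
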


\begin{proof}
Without loss generality assume that $G$ is connected. Then, by Theorem~\ref{thm:DD2-minimal}, $G=K_{1,n}$ ($n\ge 2$), $G=C_4$, or $G=S(H)$ for some connected corona graph $H$. The statement is obvious if $G=K_{1,n}$ ($n\ge 2$) or $G=C_4$. Thus assume that $G=S(H)$, where $H$ is a connected corona graph. Let $(D,D_2)$ be a $D\!D_2$-pair in $G$. If $vu$ is a~pendant edge in $H$, then we must have
$\{v,u\}\subseteq D_2$ and $n_{vu}\in D$. This implies that $V_H\subseteq D$. Finally, if $ab$ is an inner edge in $H$, then $n_{ab}\in D$ (as $D$ is a dominating set in $G=S(H)$ and no neighbor of $n_{ab}$ is in $D$). This proves that $D_2=V_H$ and $D=\{n_{xy}\colon xy\in E_H\}$. Therefore $\gamma\gamma_2(G)=|D|+|D_2|=|V_G|$.
\end{proof}
} \end{remark}

\section{Spanning minimal $D\!D_2$-graphs of a graph}\label{sec:DD2span}

It is obvious that a graph may have many non-isomorphic spanning minimal $D\!D_2$-graphs. For example, the complete graph $G=K_{3n}$ ($n \ge 3$) has a spanning minimal $D\!D_2$-graph being $S(H \circ K_1)$ of size $6n-8$, where $H$ is the multigraph of order 2 in which the only two vertices are joined by $3n-6$ edges (so being maximal in the number of edges over all its spanning minimal $D\!D_2$-graphs), and it also has a~spanning minimal $D\!D_2$-graph that consists of $n$ disjoint $3$-vertex paths of size $2n$ (so being minimal in the number of edges over all its spanning minimal $D\!D_2$-graphs). Therefore, for a~given graph $G$, a natural computational problem is to determine a spanning minimal $D\!D_2$-graph of the minimum size.  Observe that any minimal $D\!D_2$-graph has at least three vertices and the $3$-vertex path is a minimal $D\!D_2$-graph. Therefore, a spanning minimal $D\!D_2$-graph of a graph $G$ of order $n$ must be of size at least $2n/3$. Since the relevant {\em perfect $P_3$-matching problem} is NP-complete even for cubic bipartite planar $2$-connected graphs~\cite{KMZ05}, we  immediately conclude with the following theorem.

\begin{thm}
The problem of determining a spanning minimal $D\!D_2$-graph of the minimum size is NP-hard even for cubic bipartite planar $2$-connected graphs.
\end{thm}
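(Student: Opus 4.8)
The plan is to establish a reduction from the perfect $P_3$-matching problem, which asks whether the vertex set of a graph can be partitioned into parts each inducing a path on three vertices, and which is known to be NP-complete even for cubic bipartite planar $2$-connected graphs by~\cite{KMZ05}. So let $G$ be a cubic bipartite planar $2$-connected graph on $n$ vertices (with $n$ divisible by $3$, since otherwise no perfect $P_3$-matching exists). Since $\delta(G)\ge 2$, Corollary~\ref{wniosek-Henning-Rall} guarantees that $G$ itself is a $D\!D_2$-graph, so the optimization problem is well-posed on this instance. The key observation, already recorded above, is that every minimal $D\!D_2$-graph has at least three vertices and $P_3$ is itself a minimal $D\!D_2$-graph, so any spanning minimal $D\!D_2$-subgraph of $G$ has size at least $2n/3$, with equality only if it consists of $n/3$ vertex-disjoint copies of $P_3$.

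The main step is therefore to argue the following equivalence: $G$ has a spanning minimal $D\!D_2$-subgraph of size exactly $2n/3$ if and only if $G$ has a perfect $P_3$-matching. One direction is immediate: if $\{P^{(1)},\dots,P^{(n/3)}\}$ is a perfect $P_3$-matching of $G$, then the disjoint union of these paths is a spanning subgraph of $G$ in which every component is a $P_3$, hence (by Theorem~\ref{thm:DD2-minimal}, recalling $P_3=S(P_2)=S(K_1\circ K_1)$ is the subdivision graph of a corona graph) a minimal $D\!D_2$-graph, of size $2n/3$. For the converse, suppose $F$ is a spanning minimal $D\!D_2$-subgraph of $G$ with $|E_F|=2n/3$. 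By Theorem~\ref{thm:DD2-minimal} every component of $F$ is a star $K_{1,t}$ ($t\ge 2$), a $C_4$, or the subdivision graph of a connected corona multigraph; a quick count shows that each such component on $k$ vertices has at least $2k/3$ edges (a $K_{1,t}$ on $k=t+1$ vertices has $t=k-1\ge 2(k-1)/3\cdot\frac{3}{2}$... ), and more carefully one checks that the edge-to-vertex ratio of every connected minimal $D\!D_2$-graph is at least $2/3$ with equality precisely for $P_3$. Summing over components and using that $F$ spans all $n$ vertices, $|E_F|\ge 2n/3$ with equality forcing every component to be a $P_3$; hence $F$ is a perfect $P_3$-matching of $G$.

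The part that needs the most care is the edge-count inequality for the three families of connected minimal $D\!D_2$-graphs: for $K_{1,t}$ with $t\ge 2$ we have $k=t+1$ vertices and $t$ edges, and $t\ge 2(t+1)/3 \iff t\ge 2$; for $C_4$ we have $k=4$, $4$ edges, and $4 > 8/3$; for $S(H)$ with $H$ a connected corona multigraph on $p$ vertices and $q\ge p-1$ edges, $S(H)$ has $k=p+q$ vertices and $2q$ edges, and $2q\ge \tfrac23(p+q) \iff 4q\ge 2p \iff 2q\ge p$, which holds since $q\ge p-1\ge p/2$ as soon as $p\ge 2$ (and $p\ge 2$ for any corona multigraph). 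Equality $2q=p$ together with $q\ge p-1$ forces $q=1,p=2$, i.e. $H=K_1\circ K_1=P_2$ and $S(H)=P_3$; equality for stars forces $t=2$, i.e. $K_{1,2}=P_3$. Thus $P_3$ is the unique ratio-minimizer, the reduction is correct, and since it is computable in linear time and preserves the restriction to cubic bipartite planar $2$-connected graphs, the theorem follows.
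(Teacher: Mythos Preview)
Your proof is correct and takes the same approach as the paper: a reduction from perfect $P_3$-matching via the observation that any spanning minimal $D\!D_2$-subgraph has size at least $2n/3$, with equality exactly when every component is a $P_3$. The paper is terser about the bound---it follows already from each component being connected on at least three vertices (so $|E_F|\ge n-\#\{\text{components}\}\ge n-n/3$, with equality forcing every component to be a tree on three vertices), without the case analysis over the three families of Theorem~\ref{thm:DD2-minimal}.
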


As regards the maximization variant, the problem also remains NP-hard. The idea of our proof is standard and it is based upon reduction from the restricted variant of the $3$-dimensional matching problem~\cite{GJ79,K74}, being a NP-hard problem~\cite{DF86}.

\begin{prob}[The $3\textnormal{DM}3$ problem]%{prob:3DM3}
{\em Let $G=(V \cup U, E)$ be a subcubic bipartite planar graph  with no leaf, where $V$ is the union of disjoint sets $X, Y$, and $Z$, where $|X|=|Y|=|Z|=q$, and  every vertex $u \in U$ is adjacent to exactly one vertex from each of the sets $X, Y,$ and $Z$. Is there a subset $U' \subseteq U$ of cardinality $q$ dominating all vertices in $V$?}
\end{prob}

Next, let us define (the decision version of) the {\em maximum spanning minimal $D\!D_2$-graph problem} (the Max-$D\!D_2$ problem for short).

\begin{prob}[The Max-$D\!D_2$ problem]%{prob:MaxDD2}
Let $G$ be a bipartite planar $D\!D_2$-graph, and let $k$ be a positive integer. Does $G$ have a spanning minimal $D\!D_2$-graph of size $k$?
\end{prob}

Let $G=(V \cup U, E)$ be a subcubic bipartite planar graph  with no leaf, where $V$ is the union of disjoint sets $X, Y$, and $Z$, where $|X|=|Y|=|Z|=q$, and  every vertex $u \in U$ is adjacent to exactly one vertex from each of the sets $X, Y,$ and $Z$. Let $G^\super=(V_{G^\super},E_{G^\super})$ be the supergraph of $G$ obtained from $G$ by adjoining to each vertex $v \in X \cup Y \cup Z$, separately, the leaf $l_v$, and to each vertex $u \in U$ ---  the three-vertex path $a_ub_uc_u$, as illustrated in Fig.~\ref{fig:replacement-3DM3}. Formally,  $G^\super$ is the graph in which
$V_{G^\super}=V \cup \{l_v : v \in V\} \cup \bigcup_{u \in U}\{a_u,b_u,c_u\}$, and $E_{G^\super}=E \cup E_V \cup E_U$, where $E_V=\{vl_v: v \in V\}$ and $E_U=\bigcup_{u \in U}\{a_ub_u,b_uc_u,ub_u\}$.
Clearly, the supergraph $G^\super$ has $2|V|+4|U|$ vertices and $|V|+2|E|=|V|+6|U|$ edges, it remains planar, bipartite, and $\Delta(G^\super) = 4$. Furthermore, by Theorem~\ref{first-characterization}, $G^\super$ is a $D\!D_2$-graph (as each weak support of $G^\super$ has a neighbor which is neither a leaf nor a support).

\begin{figure}[!h]
\begin{center}
\pspicture(0,-0.5)(3.4,5)
%\psgrid

\scalebox{0.8}{
\cnode[linestyle=dotted,fillstyle=solid,fillcolor=gray85](0,1.5){20pt}{X}
\cnode[linewidth=0.5pt,fillstyle=solid,fillcolor=white](0,1.5){3pt}{xi}
\cnode[linewidth=0.5pt](0,0){3pt}{lxi}
\rput(-.25,1.25){{$x$}}
\rput(-.25,-.25){{$l_x$}}
\rput(0.6,0.6){{$X$}}
\cnode[linestyle=dotted,fillstyle=solid,fillcolor=gray85](2,1.5){20pt}{Y}
\cnode[linewidth=0.5pt,fillstyle=solid,fillcolor=white](2,1.5){3pt}{yi}
\cnode[linewidth=0.5pt](2,0){3pt}{lyi}
\rput(1.75,1.22){{$y$}}
\rput(1.75,-.28){{$l_y$}}
\rput(2.6,0.6){{$Y$}}
\cnode[linestyle=dotted,fillstyle=solid,fillcolor=gray85](4,1.5){20pt}{Z}
\cnode[linewidth=0.5pt,fillstyle=solid,fillcolor=white](4,1.5){3pt}{zi}
\cnode[linewidth=0.5pt](4,0){3pt}{lzi}
\rput(3.75,1.25){{$z$}}
\rput(3.75,-.25){{$l_z$}}
\rput(4.6,0.6){{$Z$}}
\cnode[linestyle=dotted,fillstyle=solid,fillcolor=gray85](2,3.5){20pt}{U}
\cnode[linewidth=0.5pt,fillstyle=solid,fillcolor=white](2,3.5){3pt}{ui}
\rput(1.75,3.75){{$u$}}
\rput(2.6,4.4){{$U$}}
\cnode[linewidth=0.5pt](0.5,5){3pt}{aui}
\cnode[linewidth=0.5pt](2,5){3pt}{bui}
\cnode[linewidth=0.5pt](3.5,5){3pt}{cui}
\rput(0.5,5.35){{$a_u$}}
\rput(2,5.38){{$b_u$}}
\rput(3.5,5.35){{$c_u$}}

\ncline[linewidth=0.5pt]{bui}{ui}
\ncline[linewidth=0.5pt]{aui}{bui}
\ncline[linewidth=0.5pt]{bui}{cui}

\ncline[linewidth=1pt]{xi}{ui}
\ncline[linewidth=1pt]{yi}{ui}
\ncline[linewidth=1pt]{zi}{ui}

\ncline[linewidth=0.5pt]{xi}{lxi}
\ncline[linewidth=0.5pt]{yi}{lyi}
\ncline[linewidth=0.5pt]{zi}{lzi}
}

\endpspicture
\caption{}\label{fig:replacement-3DM3}
\end{center}
\end{figure}

\begin{lem}\label{lem:3DM3-DD2}
There exists a solution to the $3$\textnormal{DM}$3$ problem in $G$ if and only if there exists a solution to the \textrm{Max}-$D\!D_2$ problem in the %extension
supergraph $G^\super$ with the parameter $k=|V|+3|U|+2q$.
\end{lem}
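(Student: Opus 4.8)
The plan is to analyze the structure that a minimal $D\!D_2$-graph must have when it is a spanning subgraph of $G^{\super}$, and then translate the counting of its edges into a condition on a dominating set of $G$. First I would invoke Theorem~\ref{thm:DD2-minimal} together with Remark~\ref{remark2}: a spanning minimal $D\!D_2$-subgraph $F$ of $G^{\super}$ has no $C_4$ component (one checks $C_4 \not\subseteq G^{\super}$ as a component given the pendant and path attachments and that the original $G$ has no isolated vertices), so every component of $F$ is a star $K_{1,m}$ ($m \ge 2$) or the subdivision graph of a corona multigraph. I would first pin down which edges of $G^{\super}$ are forced into \emph{any} $D\!D_2$-subgraph and which leaves/supports they create. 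Each leaf $l_v$ ($v \in V$) forces the edge $vl_v \in F$; each path $a_ub_uc_u$ attached at $u$ forces, since no component has order $\le 2$, both $a_ub_u$ and $b_uc_u$, and then $b_u$ is a support of $a_u$ (and of $c_u$), so $b_u$ must lie in the $D$-side; in particular $n_{ub_u}$-type analysis (via Theorem~\ref{first-characterization}) shows the edge $ub_u$ need not be present, but if $ub_u \notin F$ then $u$ becomes a leaf or isolated in $F$ and must be dominated through $X\cup Y\cup Z$.

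The second step is the edge count. The forced edges number $|V|$ (the $vl_v$) plus $2|U|$ (the two edges of each attached path), giving $|V| + 2|U|$. The remaining freedom is: for each $u \in U$, either include $ub_u$ (cost $1$) and leave $u$ otherwise untouched, or instead connect $u$ downward to one of its three neighbours in $X\cup Y\cup Z$; and each vertex $v \in X\cup Y\cup Z$, which is a weak support (of $l_v$), needs by Theorem~\ref{first-characterization} a neighbour in $F$ that is neither a leaf nor a support, i.e.\ $v$ needs an edge to some $u$ with that edge in $F$. So the ``downward'' edges $vu$ selected must dominate all of $V$, and each such chosen $u$ can serve all three of its neighbours simultaneously (it is adjacent to one vertex in each of $X, Y, Z$). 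Let $t$ be the number of $u$'s used downward and let these use $s$ edges total to $V$; then the edges of $F$ number $|V| + 2|U| + (|U| - t)\cdot 1 + s$, where the $(|U|-t)$ counts the $u$'s for which we keep $ub_u$. Minimality of $F$ forces, for each used $u$, that we take exactly the edges to the $v$'s it actually needs to cover and no spare edge — and it forces $s$ to be as small as possible subject to covering $V$, hence $s = |V| = 3q$ is impossible unless... here one must be careful: since each $u$ covers exactly three vertices of $V$ and $|V| = 3q$, covering $V$ with used $u$'s of total degree-to-$V$ equal to $s$ while keeping $F$ minimal means each used $u$ contributes a star-like or subdivision-like component, and minimality pushes toward each used $u$ covering $3$ new vertices, i.e.\ $t = q$, $s = 3q$. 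Then $|E_F| = |V| + 2|U| + (|U| - q) + 3q = |V| + 3|U| + 2q$ (using $|V| = 3q$ so $3q = 2q + q$, and $|V| - q = 2q$), matching $k$.

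The correspondence is then: a solution $U' \subseteq U$, $|U'| = q$, dominating $V$ gives a spanning minimal $D\!D_2$-subgraph $F$ built from the forced edges, the edges $ub_u$ for $u \notin U'$, and for each $u \in U'$ the three edges from $u$ to $X, Y, Z$ — one checks each component is a star or a subdivision of a corona multigraph (the components around $u\in U'$ are precisely subdivision graphs of coronas, with $b_u, l_v$ playing the role of leaves), so $F$ is a minimal $D\!D_2$-graph of size exactly $k$; conversely a spanning minimal $D\!D_2$-subgraph of size $k$ must, by the counting above, use exactly $q$ vertices of $U$ downward and those must cover $V$, yielding $U'$. I expect the main obstacle to be the converse direction: ruling out ``mixed'' or wasteful configurations — e.g.\ a used $u$ that covers only one or two vertices of $V$, or a component that is a large star $K_{1,m}$ spanning several of the $l_v$'s through a shared neighbour — and showing via the exact equality $|E_F| = k$ (not merely $\le k$ or $\ge k$) that such configurations are incompatible with both minimality and the prescribed size. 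Handling this cleanly will likely require a short lemma that in any spanning minimal $D\!D_2$-subgraph of $G^{\super}$, each $v \in X\cup Y\cup Z$ has exactly one $F$-edge going ``up'' to $U$ and each component meeting $U$ upward is a subdivision of a corona with a controlled edge count, after which the arithmetic closes.
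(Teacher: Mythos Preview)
Your forward direction is essentially the paper's: you build the same spanning subgraph (forced edges, $ub_u$ for $u\notin U'$, and the three edges $uv$ for $u\in U'$), and the arithmetic is right. One small slip: for $u\in U'$ the vertex $b_u$ is \emph{not} in the component of $u$; the component of $u$ is the seven-vertex tree $S(K_{1,3})$ on $\{u,x,y,z,l_x,l_y,l_z\}$, while $\{a_u,b_u,c_u\}$ is a separate $P_3$.

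For the converse you take a genuinely different route than the paper, and the gap you flag is real and more serious than your sketch suggests. Two points:

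\medskip
\noindent\textbf{(i) The ``up/down'' dichotomy for $u$ needs justification.} It is true that no $u$ can carry both $ub_u$ and a downward edge, but not for the reason you give. The clean argument (implicit in Case~1 of Theorem~\ref{thm:DD2-minimal}) is: if $ub_u\in F$ then $d_F(b_u)=3$, so $b_u$ lies on the $A$-side of its component and that component must be a star with centre $b_u$; hence $u$ is a leaf and has no further edges. Without this you cannot write $|E_F|=|V|+2|U|+(|U|-t)+s$.

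\medskip
\noindent\textbf{(ii) ``Minimality forces $s$ minimal'' is not valid reasoning.} Minimality of $F$ means no proper spanning $D\!D_2$-subgraph exists; it does \emph{not} say $F$ has the fewest possible edges among spanning $D\!D_2$-subgraphs. Concretely, a star $K_{1,m}$ centred at some $v\in V$ with $m-1\ge 2$ leaves in $U$ is a perfectly good minimal $D\!D_2$-component that gives $v$ several up-edges. Your proposed lemma (``each $v$ has exactly one $F$-edge into $U$'') is in fact true, but only because of the \emph{size hypothesis} $|E_F|=k$ together with the particular incidence structure of $G$ (each $u$ meets $X,Y,Z$ once each). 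Proving it seems to require exactly the ingredient you are missing.

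\medskip
The paper's converse avoids edge-counting altogether and is considerably shorter. It first shows $F$ is acyclic (any cycle would force some $v\in V$ to have $d_F(v)\ge 3$, hence its component is a star, which has no cycle). Then the forest identity gives exactly $|U|+q$ components, of which at least $|U|$ are the $b_u$-components, leaving at most $q$ components that contain the $l_v$'s. Finally a diameter argument shows two vertices of $X$ (respectively $Y$, $Z$) cannot share a component: a path between them would pass through some intermediate $v\in V$ of $F$-degree $\ge 3$, forcing a star, contradiction. Hence there are exactly $q$ such components, each containing one vertex from each of $X,Y,Z$ and therefore a single $u$ joining them; these $u$'s form $U'$. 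If you want to salvage your edge-counting route, you will still need this ``no two vertices of the same colour class share a component'' step, which is where the real work lies.
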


\begin{proof}
Let $U'$ with $|U'|=q$ be a solution to the $3$DM$3$ problem in $G=(V \cup U, E)$. A spanning minimal $D\!D_2$-graph of $G^\super=(V^\super,E^\super)$ consists of the following $|U|+q$ components $F^3_u, F^7_u, T^4_w$, all being trees  (see Fig.~\ref{fig:replacement-3DM3}), where for $u \in U'$ and for $w \in U-U'$:

\begin{itemize}
\item[--] $F^3_u=(V^3_u,E^3_u)$, where $V^3_u=\{a_u,b_u,c_u\}$, $E^3_u=\{a_ub_u,b_uc_u\}$;
\item[--] $F^7_u=(V^7_u,E^7_u)$, $V^7_u=N_{G}[u] \cup \{l_v : v \in N_{G}(u)\}$, $E^7_u=\bigcup_{v \in N_{G}(u)} \{uv,vl_v\}$;
\item[--] $T^4_w=(V^4_w,E^4_w)$, where $V^4_w=\{w,a_w,b_w,c_w\}$, $E^4_u=\{wb_w,a_wb_w,b_wc_w\}$.
\end{itemize}
Clearly, each of the graphs $F^3_u$, $F^7_u$, and  $T^4_w$ is a minimal $D\!D_2$-graph, and all these graphs constitute a spanning subgraph of $G^\super$ of size $8q+3|U - U'|=5q+3|U|=|V|+3|U|+2q$ as required.

On the other hand, let $H=(V_H,E_H)$ be a spanning minimal $D\!D_2$-graph of size $|V|+3|U|+2q$ of $G^\super$. The following claims are consequences of the structure of $G^\super$ and properties of minimal $D\!D_2$-graphs (see Case 1 in the proof of Theorem~\ref{thm:DD2-minimal}).
\begin{claim}\label{clm:1}
 {\em For a vertex $v \in V$, since $v$ is a weak support in $G^\super$, we have $2 \le d_H(v) \le 3$ and $vl_v \in E_H$. Furthermore, if $d_H(v) =3$ then the connected component of $H$ which $v$ belongs to is isomorphic to $K_{1,3}$.}
\end{claim}
\begin{claim}\label{clm:2}
{\em $H$ is acyclic and has exactly $|U|+q$ connected components.}
\end{claim}
\begin{proof}
Suppose that a connected component $C$ of $H$ has a cycle. Then $d_C(v)=3$ for some $v \in V$, immediately implying $C=K_{1,3}$ by Claim~\ref{clm:1}, a contradiction.  Next, since $H$ is a forest of order $2|V|+4|U|$ and size  $|V|+3|U|+2q$, it has $(2|V|+4|U|)-(|V|+3|U|+2q)=|U|+q$ connected components (as $|V|=3q$).
\end{proof}
\begin{claim}\label{clm:3}
{\em All leaves in $\{l_v : v \in V\}$ belong to at most $q$ components of $H$.}
\end{claim}
\begin{proof}
For a vertex $u \in U$, since $b_u$ is a strong support in $G^\textrm{e}$, we have $2 \le d_H(b_u) \le 3$, both edges $a_ub_u$ and $b_uc_u$ belong to $E_H$, and the connected component of $H$ which $b_u$ belongs to is isomorphic to either $P_3$ or $K_{1,3}$. Therefore, $H$ has at least $|U|$ connected components such that no vertex in $V$ belongs to any of them. Consequently, taking into account Claim~\ref{clm:2}, all leaves in $\{l_v : v \in V\}$ belong to at most $q$ components of $H$ (we shall refer to those components as {\em $l$-components}).
\end{proof}

\begin{claim}\label{clm:4}
{\em For any two vertices $x',x'' \in X$, the leaves $l_{x'}$ and $l_{x''}$, and so both $x'$ and $x''$, belong to two distinct $l$-components of $H$. The analogous properties holds for any two vertices $y',y'' \in Y$ and any two vertices $z',z'' \in Z$.}
\end{claim}

\begin{proof}
Suppose that $C$ is an $l$-component of $H$ which both $l_{x'}$ and $l_{x''}$ belong to. Then the diameter of $C$ is at least six, and $d_C(v)=3$ for some $v \in V$, immediately implying $C=K_{1,3}$ by Claim 1, a contradiction.
\end{proof}

Now, taking into account the structure of the graph~$G$, in particular, the fact that $d_G(u) =3$, by combining Claims~\ref{clm:1},~\ref{clm:3} and~\ref{clm:4}, we may conclude that there are exactly~$q$ $l$-components in $H$, say $C_1,\ldots,C_q$,  such that each component $C_i$ has exactly one vertex from each of the sets $X,Y,Z$ and $U$, say $x_i,y_i,z_i$ and $u_i$, $i=1,\ldots, q$. Since $N_G(u_i)=\{x_i,y_i,z_i\}$ for $i=1,\ldots, q$, the set $U'=\{u_1,\ldots,u_q\}$ constitutes a solution to the $3$DM$3$ problem in $G$.
\end{proof}

Clearly, the above reduction takes polynomial time (in the order and the size of a~graph~$G$). Also, a non-deterministic polynomial algorithm for the Max-$D\!D_2$problem in $G$ just guesses an edge-cover $C$ of $G$ and checks whether $C$ is a $D\!D_2$-graph (which can be done in polynomial time by Remark \ref{remark1}). Hence by Lemma~\ref{lem:3DM3-DD2} and the fact that the $3$DM$3$ problem is NP-complete~\cite{DF86}, we conclude with the following theorem.

\begin{thm}\label{thm-MaxDD2-NP}
The \textnormal{Max-$D\!D_2$} problem in bipartite planar graphs of maximum degree at most four is \emph{NP}-complete. Consequently, the problem of determining the spanning minimal $D\!D_2$-graph of maximum size of bipartite planar graphs of maximum degree at most four is {\em NP}-hard.
\end{thm}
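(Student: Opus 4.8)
The plan is to derive the statement directly from Lemma~\ref{lem:3DM3-DD2} together with a routine verification that the Max-$D\!D_2$ problem lies in NP and that the reduction in that lemma stays inside the prescribed class of graphs. I would first record membership in NP, then invoke the NP-completeness of the $3$DM$3$ problem through the reduction, and finally transfer the hardness to the optimization version.

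For membership in NP, the certificate for an instance $(G,k)$ is the edge set $E_H$ of a spanning subgraph $H$ of $G$. In polynomial time one checks that $|E_H|=k$, that every vertex of $G$ lies in a component of $H$ of order at least three (so that $H$ has no isolated vertex and no $K_2$ component, hence is a genuine spanning subgraph for which minimality can be asked), and that $H$ is a minimal $D\!D_2$-graph. The last test is linear time, applied component by component, by Remark~\ref{remark3}: one tests bipartiteness, the placement of leaves and supports, and the distance-two condition of Observation~\ref{obs:coco}, or detects a $C_4$ or a $K_{1,n}$ component. Thus the Max-$D\!D_2$ problem is in NP.

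For NP-hardness I would use the reduction preceding Lemma~\ref{lem:3DM3-DD2}. Given an instance $G=(V\cup U,E)$ of the $3$DM$3$ problem, the graph $G^\super$ is constructed in time polynomial in $|V_G|+|E_G|$; as already noted, $G^\super$ has $2|V|+4|U|$ vertices and $|V|+6|U|$ edges, is bipartite and planar with $\Delta(G^\super)=4$, and is a $D\!D_2$-graph by Theorem~\ref{first-characterization}, so $(G^\super,k)$ with $k=|V|+3|U|+2q$ is a legitimate instance of the Max-$D\!D_2$ problem inside the claimed class. By Lemma~\ref{lem:3DM3-DD2}, $G$ is a yes-instance of $3$DM$3$ if and only if $(G^\super,k)$ is a yes-instance of Max-$D\!D_2$, and since $3$DM$3$ is NP-complete~\cite{DF86}, the Max-$D\!D_2$ problem is NP-hard, hence NP-complete, already on bipartite planar graphs of maximum degree at most four. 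The optimization consequence then follows by an obvious Turing reduction: a polynomial-time algorithm that produces a spanning minimal $D\!D_2$-graph of maximum size immediately decides Max-$D\!D_2$ by comparing the size of its output with $k$.

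Essentially all the combinatorial substance is hidden inside Lemma~\ref{lem:3DM3-DD2} --- in particular the componentwise analysis in Claims~\ref{clm:1}--\ref{clm:4} of any spanning minimal $D\!D_2$-graph of $G^\super$ of size exactly $|V|+3|U|+2q$ --- which we are free to use. Consequently the only point that deserves care in the present argument is the NP-membership certificate, namely that being a spanning minimal $D\!D_2$-graph of size $k$ is polynomial-time checkable; this is precisely what Remarks~\ref{remark1} and~\ref{remark3} supply. Everything else is bookkeeping about the size, planarity, bipartiteness, and maximum degree of $G^\super$.
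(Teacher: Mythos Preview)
Your proposal is correct and follows essentially the same route as the paper: membership in NP via a guessed spanning subgraph whose minimality as a $D\!D_2$-graph is checked in polynomial time, and NP-hardness via Lemma~\ref{lem:3DM3-DD2} combined with the NP-completeness of $3$DM$3$ from~\cite{DF86}, together with the already-noted bipartiteness, planarity, and degree bound of $G^\super$. If anything, your NP-membership certificate is slightly more precise than the paper's, since you invoke Remark~\ref{remark3} (recognizing \emph{minimal} $D\!D_2$-graphs) rather than only Remark~\ref{remark1}.
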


\section{$D\!D_2$-supergraphs of non-$D\!D_2$-graphs}\label{sec:nonDD2}

Staying on the algorithmic issue, given a non-$D\!D_2$-graph $G$, one can ask the following natural question: What is the smallest number of edges which added to $G$ result in a~$D\!D_2$-graph? In particular, one can consider the following decision variant of this problem.
\begin{prob}[The Min-to-$D\!D_2$ problem]%{prob:Min-to-DD2}
Let $G$ be a non-$D\!D_2$-graph and let $k$ be a positive integer. Is it possible to add at most $k$ edges to $G$ such that the resulting graph becomes a $D\!D_2$-graph?
\end{prob}
\noindent In this section, by reduction from the Set Cover problem~\cite{GJ79,K74}, we show that the Min-to-$D\!D_2$ problem is NP-complete (and so its optimization variant is NP-hard).

Let  $\UU=\{u_1,\ldots,u_n\}$ be a set of $n$ items and let $\FF=\{F_1,\dots,F_m\}$ be a family of $m$ sets containing the items in $\UU$, i.e., each $F_i\subseteq\UU$, such that each element in $\UU$ belongs to at least one set from $\FF$; we assume that $(\UU,\FF)$ is represented as the bipartite graph $G=(V_G,E_G)$ with the partition $V_G=\UU \cup \FF$, where $\{u,F\} \in E_G$ if and only if $u \in F$, for $u \in \UU$ and $F \in \FF$; see Fig.~\ref{fig:set-cover}(a). A~$k$-element subset of $\FF$, whose union is equal to the whole set $\UU$, is called a {\em set cover of size $k$.}

\begin{prob}[The Set Cover problem]%{prob:SC}
Let $(\UU,\FF)$ be a set system, and let $k \le |\FF|$ be a~positive integer. Does $(\UU,\FF)$ possess a set cover of size $k$?
\end{prob}

The Set Cover problem is well known to be NP-complete~\cite{K74}. We are going to prove that for a given set system  $(\UU,\FF)$, represented as the  bipartite graph $G=(\UU \cup \FF,E_G)$, and a positive integer $k \le |\FF|$, there exists a set cover of size $k$ if and only if there is a solution for the Min-to-$D\!D_2$ problem in the graph $G^\cuper=(V_{G^\cuper},E_{G^\cuper})$ (see Fig.~\ref{fig:set-cover} for an illustration), with the same parameter $k$, where:
\begin{itemize}
\item[$\bullet$ ] $V_{G^\cuper}=\FF \cup \bigcup_{i=1}^\eta \{u^i_1,\ldots,u^i_n\} \cup \bigcup_{i=1}^\eta \{l^i_1,\ldots,l^i_n\} \cup \bigcup_{j=1}^m \{L^j_1,L^j_2\}$, with $\eta=2k+1$;
\item[$\bullet$ ] $E_{G^\cuper}=\check{E} \cup \bar{E}$, where $\check{E} = \bigcup_{j=1}^m\bigcup_{u_i \in F_j}\{u_i^1F_j,u_i^2F_j,\ldots,u_i^\eta F_j\}$,\\
and $\bar{E}=\bigcup_{i=1}^\eta \{u^i_1l^i_1,u^i_2l^i_2,\ldots,u^i_nl^i_n\} \cup \bigcup_{j=1}^m \{F_jL^j_1,F_jL^j_2\}$.
\end{itemize}
Notice that $L_{G^\cuper}= \bigcup_{i=1}^\eta \{l^i_1,l^i_2,\ldots,l^i_n\} \cup \bigcup_{j=1}^m \{L^j_1,L^j_2\}$,  and $S'_{G^\cuper}=\bigcup_{i=1}^\eta \{u^i_1,u^i_2,\ldots,u^i_n\}$, while $S''_{G^\cuper}=\FF$. Moreover, none of weak supports in $G^\cuper$ has a non-leaf non-support neighbor, i.e., we have $N_{G^\cuper}(s) - L_{G^\cuper} =\{F \in \FF: s \in F\} \subseteq S''_{G^\cuper}$  for each weak support $s \in S'_{G^\cuper}$, and hence $G^\cuper$ is not a $D\!D_2$-graph by Theorem~\ref{first-characterization}. In addition,  the set $S'_{G^\cuper}$ of weak supports is independent in $G^\cuper$. Clearly, our reduction takes polynomial time: the order of $G^\cuper$ is equal to $(2k+1)n+3m \le (2m+1)n+3m$, while its size equals $2m+(2k+1)(n+\sum_{j=1}^m|F_j|) \le 2m+(2m+1)(n+\sum_{j=1}^m|F_j|)$.

\begin{figure}[tb]
\begin{center}
\pspicture(0,0.35)(8,4.5)
\scalebox{0.8}{
\rput(-0.25,5.75){a)}

\cnode[linewidth=0.5pt,fillstyle=solid,fillcolor=white](2,1){3pt}{F1}
\cnode[linewidth=0.5pt,fillstyle=solid,fillcolor=white](3.25,1){3pt}{F2}
\cnode[linewidth=0.5pt,fillstyle=solid,fillcolor=white](5.75,1){3pt}{Fm}

\cnode[linewidth=0.5pt,fillstyle=solid,fillcolor=white](0.75,4){3pt}{u1}
\cnode[linewidth=0.5pt,fillstyle=solid,fillcolor=white](2,4){3pt}{u2}
\cnode[linewidth=0.5pt,fillstyle=solid,fillcolor=white](3.25,4){3pt}{u3}
\cnode[linewidth=0.5pt,fillstyle=solid,fillcolor=white](5.75,4){3pt}{un1}
\cnode[linewidth=0.5pt,fillstyle=solid,fillcolor=white](7,4){3pt}{un}

\ncline[linewidth=0.5pt,arrowsize=4pt 2,linecolor=black]{F1}{u1}
\ncline[linewidth=0.5pt,arrowsize=4pt 2,linecolor=black]{F1}{u3}
\ncline[linewidth=0.5pt,arrowsize=4pt 2,linecolor=black]{F2}{u2}
\ncline[linewidth=0.5pt,arrowsize=4pt 2,linecolor=black]{F2}{u3}
\ncline[linewidth=0.5pt,arrowsize=4pt 2,linecolor=black]{F2}{un1}
\ncline[linewidth=0.5pt,arrowsize=4pt 2,linecolor=black]{Fm}{u3}
\ncline[linewidth=0.5pt,arrowsize=4pt 2,linecolor=black]{Fm}{un}

\rput(2,0.5){$F_1$}
\rput(3.25,0.5){$F_2$}
\rput(5.75,0.5){$F_m$}

\rput(0.75,4.35){$u_1$}
\rput(2,4.35){$u_2$}
\rput(3.25,4.35){$u_3$}
\rput(5.75,4.35){$u_{n-1}$}
\rput(7,4.35){$u_n$}

\rput(4.5,4){$\cdots$}
\rput(4.5,1){$\cdots$}

\rput(3.8125,-0.25){a set system $(\UU,\FF)$ as the bipartite graph $G$}

}
\endpspicture
\pspicture(0,0.35)(6.6,4.8)
%\psgrid
\scalebox{0.8}{
\rput(-0.75,5.75){b)}

\pspolygon[linewidth=0.5pt,arrowsize=4pt 2,linecolor=gray95,fillstyle=solid,fillcolor=gray95](1.5,1)(6.25,1)(7.75,4)(7.75,5.5)(0,5.5)(0,4)

\cnode[linewidth=0.5pt,fillstyle=solid,fillcolor=white](2,1){3pt}{F1}
\cnode[linewidth=0.5pt,fillstyle=solid,fillcolor=white](3.25,1){3pt}{F2}
\cnode[linewidth=0.5pt,fillstyle=solid,fillcolor=white](5.75,1){3pt}{Fm}

\cnode[linewidth=0.5pt,fillstyle=solid,fillcolor=white](0.75,4){3pt}{u1}
\cnode[linewidth=0.5pt,fillstyle=solid,fillcolor=white](2,4){3pt}{u2}
\cnode[linewidth=0.5pt,fillstyle=solid,fillcolor=white](3.25,4){3pt}{u3}
\cnode[linewidth=0.5pt,fillstyle=solid,fillcolor=white](5.75,4){3pt}{un1}
\cnode[linewidth=0.5pt,fillstyle=solid,fillcolor=white](7,4){3pt}{un}

\cnode[linewidth=0.5pt,fillstyle=solid,fillcolor=white](0.75,5.5){3pt}{lu1}
\cnode[linewidth=0.5pt,fillstyle=solid,fillcolor=white](2,5.5){3pt}{lu2}
\cnode[linewidth=0.5pt,fillstyle=solid,fillcolor=white](3.25,5.5){3pt}{lu3}
\cnode[linewidth=0.5pt,fillstyle=solid,fillcolor=white](5.75,5.5){3pt}{lun1}
\cnode[linewidth=0.5pt,fillstyle=solid,fillcolor=white](7,5.5){3pt}{lun}

\ncline[linewidth=0.5pt,arrowsize=4pt 2,linecolor=black]{F1}{u1}
\ncline[linewidth=0.5pt,arrowsize=4pt 2,linecolor=black]{F1}{u3}
\ncline[linewidth=0.5pt,arrowsize=4pt 2,linecolor=black]{F2}{u2}
\ncline[linewidth=0.5pt,arrowsize=4pt 2,linecolor=black]{F2}{u3}
\ncline[linewidth=0.5pt,arrowsize=4pt 2,linecolor=black]{F2}{un1}
\ncline[linewidth=0.5pt,arrowsize=4pt 2,linecolor=black]{Fm}{u3}
\ncline[linewidth=0.5pt,arrowsize=4pt 2,linecolor=black]{Fm}{un}

\ncline[linewidth=0.5pt,arrowsize=4pt 2,linecolor=black]{lu1}{u1}
\ncline[linewidth=0.5pt,arrowsize=4pt 2,linecolor=black]{lu2}{u2}
\ncline[linewidth=0.5pt,arrowsize=4pt 2,linecolor=black]{lu3}{u3}
\ncline[linewidth=0.5pt,arrowsize=4pt 2,linecolor=black]{lun1}{un1}
\ncline[linewidth=0.5pt,arrowsize=4pt 2,linecolor=black]{lun}{un}

\rput(2,0.5){$F_1$}
\rput(3.25,0.5){$F_2$}
\rput(5.75,0.5){$F_m$}

\rput(1.15,4.35){$u^i_1$}
\rput(2.4,4.35){$u^i_2$}
\rput(3.65,4.35){$u^i_3$}
\rput(5.15,4.35){$u^i_{n-1}$}
\rput(6.6,4.35){$u^i_n$}

\rput(1.15,5.15){$l^i_1$}
\rput(2.4,5.15){$l^i_2$}
\rput(3.65,5.15){$l^i_3$}
\rput(5.15,5.15){$l^i_{n-1}$}
\rput(6.6,5.15){$l^i_n$}

\rput(4.5,4){$\cdots$}
\rput(4.5,1){$\cdots$}

%\psline[linewidth=0.5pt,arrowsize=4pt 2,linecolor=black]{-}(0,0.5)(0,1)
%\psline[linewidth=0.5pt,arrowsize=4pt 2,linecolor=black]{-}(0,0.5)(2,1)
%\psline[linewidth=0.5pt,arrowsize=4pt 2,linecolor=black]{-}(0,0.5)(3.25,1)
%\psline[linewidth=0.5pt,arrowsize=4pt 2,linecolor=black]{-}(0,0.5)(5.75,1)

\rput(3.8125,-0.25){structure $X_i$}

%\cnode*[linewidth=0.5pt,fillstyle=solid,fillcolor=white](0,0.5){3pt}{t}
%\rput(-0.4,0.5){$t$}
}

\endpspicture

\pspicture(0,-1.25)(8,7)
%\psgrid
\scalebox{0.8}{
\rput(-1.75,6.25){c)}

\pspolygon[linewidth=0.5pt,arrowsize=4pt 2,linecolor=gray75,fillstyle=solid,fillcolor=gray75](1.75,1)(8.25,1)(2.25,4)(2.25,5.5)(-0.75,5.5)(-.75,4)(1.75,1)

\pspolygon[linewidth=0.5pt,arrowsize=4pt 2,linecolor=gray85,fillstyle=solid,fillcolor=gray85](1.75,1)(8.25,1)(5.75,4)(5.75,5.5)(2.75,5.5)(2.75,4)(1.75,1)

\pspolygon[linewidth=0.5pt,arrowsize=4pt 2,linecolor=gray95,fillstyle=solid,fillcolor=gray95](1.75,1)(8.25,1)(10.75,4)(10.75,5.5)(7.75,5.5)(7.75,4)(1.75,1)

\rput(6.75,4){$\cdots$}
\rput(6,1){$\cdots$}

\rput(1,3.985){\small $\cdots$}
\rput(4.5,3.985){\small $\cdots$}
\rput(9.5,3.985){\small $\cdots$}

\rput(.75,5.9){structure $X_1$}
\rput(4.25,5.9){structure $X_2$}
\rput(9.25,5.9){structure $X_\eta$}

\rput(1.5,0.7){$F_1$}
\rput(3.5,0.7){$F_2$}
\rput(8.5,0.7){$F_m$}

\rput(1.5,-1){$L^1_1$}
\rput(2.5,-1){$L^1_2$}
\rput(3.5,-1){$L^2_1$}
\rput(4.5,-1){$L^2_2$}
\rput(7.5,-1){$L^m_1$}
\rput(8.5,-1){$L^m_2$}

\rput(11.5,2){the final graph $G^\cuper$}

\psline[linewidth=0.5pt,arrowsize=4pt 2,linecolor=black]{-}(2,1)(-0.5,4)(-0.5,5.5)
\psline[linewidth=0.5pt,arrowsize=4pt 2,linecolor=black]{-}(2,1)(0.5,4)

\psline[linewidth=0.5pt,arrowsize=4pt 2,linecolor=black]{-}(2,1)(3,4)(3,5.5)
\psline[linewidth=0.5pt,arrowsize=4pt 2,linecolor=black]{-}(2,1)(4,4)

\psline[linewidth=0.5pt,arrowsize=4pt 2,linecolor=black]{-}(2,1)(8,4)(8,5.5)
\psline[linewidth=0.5pt,arrowsize=4pt 2,linecolor=black]{-}(2,1)(9,4)

\psline[linewidth=0.5pt,arrowsize=4pt 2,linecolor=black]{-}(1.5,-0.5)(2,1)(2.5,-0.5)

\cnode[linewidth=0.5pt,fillstyle=solid,fillcolor=white](2,1){3pt}{un}
\cnode[linewidth=0.5pt,fillstyle=solid,fillcolor=white](-0.5,4){3pt}{un}
\cnode[linewidth=0.5pt,fillstyle=solid,fillcolor=white](3,4){3pt}{un}
\cnode[linewidth=0.5pt,fillstyle=solid,fillcolor=white](8,4){3pt}{un}
\cnode[linewidth=0.5pt,fillstyle=solid,fillcolor=white](-0.5,5.5){3pt}{un}
\cnode[linewidth=0.5pt,fillstyle=solid,fillcolor=white](3,5.5){3pt}{un}
\cnode[linewidth=0.5pt,fillstyle=solid,fillcolor=white](8,5.5){3pt}{un}
\cnode[linewidth=0.5pt,fillstyle=solid,fillcolor=white](2.5,-0.5){3pt}{un}
\cnode[linewidth=0.5pt,fillstyle=solid,fillcolor=white](1.5,-.5){3pt}{un}

\psline[linewidth=0.5pt,arrowsize=4pt 2,linecolor=black]{-}(4,1)(0,4)(0,5.5)
\psline[linewidth=0.5pt,arrowsize=4pt 2,linecolor=black]{-}(4,1)(0.5,4)
\psline[linewidth=0.5pt,arrowsize=4pt 2,linecolor=black]{-}(4,1)(1.5,4)(1.5,5.5)

\psline[linewidth=0.5pt,arrowsize=4pt 2,linecolor=black]{-}(4,1)(3.5,4)(3.5,5.5)
\psline[linewidth=0.5pt,arrowsize=4pt 2,linecolor=black]{-}(4,1)(4,4)
\psline[linewidth=0.5pt,arrowsize=4pt 2,linecolor=black]{-}(4,1)(5,4)(5,5.5)

\psline[linewidth=0.5pt,arrowsize=4pt 2,linecolor=black]{-}(4,1)(8.5,4)(8.5,5.5)
\psline[linewidth=0.5pt,arrowsize=4pt 2,linecolor=black]{-}(4,1)(9,4)
\psline[linewidth=0.5pt,arrowsize=4pt 2,linecolor=black]{-}(4,1)(10,4)(10,5.5)

\psline[linewidth=0.5pt,arrowsize=4pt 2,linecolor=black]{-}(3.5,-0.5)(4,1)(4.5,-0.5)

\cnode[linewidth=0.5pt,fillstyle=solid,fillcolor=white](4,1){3pt}{un}
\cnode[linewidth=0.5pt,fillstyle=solid,fillcolor=white](10,4){3pt}{un}
\cnode[linewidth=0.5pt,fillstyle=solid,fillcolor=white](8.5,4){3pt}{un}
\cnode[linewidth=0.5pt,fillstyle=solid,fillcolor=white](5,4){3pt}{un}
\cnode[linewidth=0.5pt,fillstyle=solid,fillcolor=white](3.5,4){3pt}{un}
\cnode[linewidth=0.5pt,fillstyle=solid,fillcolor=white](1.5,4){3pt}{un}
\cnode[linewidth=0.5pt,fillstyle=solid,fillcolor=white](0.5,4){3pt}{un}
\cnode[linewidth=0.5pt,fillstyle=solid,fillcolor=white](0,4){3pt}{un}
\cnode[linewidth=0.5pt,fillstyle=solid,fillcolor=white](10,5.5){3pt}{un}
\cnode[linewidth=0.5pt,fillstyle=solid,fillcolor=white](8.5,5.5){3pt}{un}
\cnode[linewidth=0.5pt,fillstyle=solid,fillcolor=white](5,5.5){3pt}{un}
\cnode[linewidth=0.5pt,fillstyle=solid,fillcolor=white](3.5,5.5){3pt}{un}
\cnode[linewidth=0.5pt,fillstyle=solid,fillcolor=white](1.5,5.5){3pt}{un}
\cnode[linewidth=0.5pt,fillstyle=solid,fillcolor=white](0.5,5.5){3pt}{un}
\cnode[linewidth=0.5pt,fillstyle=solid,fillcolor=white](0,5.5){3pt}{un}
\cnode[linewidth=0.5pt,fillstyle=solid,fillcolor=white](4.5,-0.5){3pt}{un}
\cnode[linewidth=0.5pt,fillstyle=solid,fillcolor=white](3.5,-.5){3pt}{un}

\psline[linewidth=0.5pt,arrowsize=4pt 2,linecolor=black]{-}(8,1)(0.5,4)(0.5,5.5)
\psline[linewidth=0.5pt,arrowsize=4pt 2,linecolor=black]{-}(8,1)(2,4)(2,5.5)

\psline[linewidth=0.5pt,arrowsize=4pt 2,linecolor=black]{-}(8,1)(4,4)(4,5.5)
\psline[linewidth=0.5pt,arrowsize=4pt 2,linecolor=black]{-}(8,1)(5.5,4)(5.5,5.5)

\psline[linewidth=0.5pt,arrowsize=4pt 2,linecolor=black]{-}(8,1)(9,4)(9,5.5)
\psline[linewidth=0.5pt,arrowsize=4pt 2,linecolor=black]{-}(8,1)(10.5,4)(10.5,5.5)

\psline[linewidth=0.5pt,arrowsize=4pt 2,linecolor=black]{-}(8.5,-0.5)(8,1)(7.5,-0.5)

\cnode[linewidth=0.5pt,fillstyle=solid,fillcolor=white](8,1){3pt}{un}
\cnode[linewidth=0.5pt,fillstyle=solid,fillcolor=white](10.5,4){3pt}{un}
\cnode[linewidth=0.5pt,fillstyle=solid,fillcolor=white](9,4){3pt}{un}
\cnode[linewidth=0.5pt,fillstyle=solid,fillcolor=white](5.5,4){3pt}{un}
\cnode[linewidth=0.5pt,fillstyle=solid,fillcolor=white](4,4){3pt}{un}
\cnode[linewidth=0.5pt,fillstyle=solid,fillcolor=white](2,4){3pt}{un}
\cnode[linewidth=0.5pt,fillstyle=solid,fillcolor=white](.5,4){3pt}{un}
\cnode[linewidth=0.5pt,fillstyle=solid,fillcolor=white](10.5,5.5){3pt}{un}
\cnode[linewidth=0.5pt,fillstyle=solid,fillcolor=white](9,5.5){3pt}{un}
\cnode[linewidth=0.5pt,fillstyle=solid,fillcolor=white](5.5,5.5){3pt}{un}
\cnode[linewidth=0.5pt,fillstyle=solid,fillcolor=white](4,5.5){3pt}{un}
\cnode[linewidth=0.5pt,fillstyle=solid,fillcolor=white](2,5.5){3pt}{un}
\cnode[linewidth=0.5pt,fillstyle=solid,fillcolor=white](.5,5.5){3pt}{un}
\cnode[linewidth=0.5pt,fillstyle=solid,fillcolor=white](8.5,-0.5){3pt}{un}
\cnode[linewidth=0.5pt,fillstyle=solid,fillcolor=white](7.5,-.5){3pt}{un}
}

\endpspicture

%\caption{Illustrating the reduction from the Set Cover Problem.}\label{fig:set-cover}
\caption{}\label{fig:set-cover}
\end{center}
\end{figure}
\begin{lem}\label{lem:SC-Min-to-DD2}
Let $\langle G=(\UU \cup \FF,E),k\rangle$ be an instance of the Set Cover problem $($with $k \le |\FF|$$)$. Then for $(\UU,\FF)$ there exists a set cover of size $k$ if and only if there exists a~solution to the Min-to-$D\!D_2$ problem for the instance $\langle G^\cuper,k\rangle$.
\end{lem}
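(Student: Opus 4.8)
The strategy is to reduce both implications to the criterion of Theorem~\ref{first-characterization}: since adding edges to $G^\cuper$ cannot create isolated vertices, any graph $G'$ obtained from $G^\cuper$ by adding edges is a $D\!D_2$-graph if and only if every weak support of $G'$ has a neighbour that is neither a leaf nor a support of $G'$. I would first record the part of the ``skeleton'' of $G^\cuper$ that is stable under edge additions: every $F_j$ has degree at least two (and degree at least $\eta+2$ whenever it is nonempty), so no $F_j$ is ever a leaf, and every $u^i_t$ has degree at least two, so no $u^i_t$ is ever a leaf; hence the only vertices whose leaf-status an edge addition can alter are the pendant vertices $l^i_t$ and $L^j_s$, and such a vertex ceases to be a leaf exactly when some added edge is incident with it.

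For the forward implication, given a set cover $\{F_{j_1},\dots,F_{j_k}\}$ of $\UU$, I would let $G'$ be obtained from $G^\cuper$ by adding the $k$ edges $L^{j_1}_1L^{j_1}_2,\dots,L^{j_k}_1L^{j_k}_2$. A short case check on the vertex types then shows that the leaves of $G'$ are the $l^i_t$'s together with the $L^j_1,L^j_2$ for $F_j$ outside the cover, that the weak supports of $G'$ are exactly the vertices $u^i_t$ (each with unique leaf-neighbour $l^i_t$), and that for $F_j$ in the cover the vertices $L^j_1,L^j_2$ now have degree two, so that $F_j$ has no leaf-neighbour and is neither a leaf nor a support of $G'$. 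Since the cover meets $\UU$, every $u^i_t$ has some cover member $F_{j_s}\ni u_t$ among its neighbours, and this $F_{j_s}$ fulfils condition~$(3)$ of Theorem~\ref{first-characterization} for $u^i_t$; hence $G'$ is a $D\!D_2$-graph obtained by adding only $k$ edges.

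For the reverse implication, suppose $G'$ arises from $G^\cuper$ by adding a set $A$ of at most $k$ edges and is a $D\!D_2$-graph. The pigeonhole step is: the $\eta=2k+1$ copies $X_i=\{u^i_1,\dots,u^i_n,l^i_1,\dots,l^i_n\}$ are pairwise disjoint and disjoint from every $F_j$ and $L^j_s$, so the at most $2|A|\le 2k$ endpoints of $A$ meet at most $2k$ of these copies, and some copy $X_{i^*}$ contains no endpoint of $A$. Every vertex of $X_{i^*}$ then has the same neighbourhood in $G'$ as in $G^\cuper$, so each $l^{i^*}_t$ is still a leaf and each $u^{i^*}_t$ is still a weak support of $G'$; applying Theorem~\ref{first-characterization} to $u^{i^*}_t$ gives a neighbour of $u^{i^*}_t$ other than $l^{i^*}_t$ --- necessarily some $F_j$ with $u_t\in F_j$ --- that is not a support of $G'$. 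Letting $\mathcal{C}$ be the family of all $F_j$ that are not supports of $G'$, this shows $\mathcal{C}$ is a set cover of $\UU$. Finally, each $F_j\in\mathcal{C}$ is still adjacent in $G'$ to both $L^j_1$ and $L^j_2$ yet has no leaf-neighbour, so $L^j_1$ and $L^j_2$ are non-leaves of $G'$ and each is an endpoint of an edge of $A$; as these $2|\mathcal{C}|$ vertices are pairwise distinct and $A$ has at most $2k$ endpoints, $|\mathcal{C}|\le k$, and padding $\mathcal{C}$ up to size exactly $k$ (possible since $k\le|\FF|$) yields the required set cover.

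The only genuinely delicate point is deriving $|\mathcal{C}|\le k$ rather than merely $|\mathcal{C}|\le 2k$: this is exactly what the two \emph{private} pendant vertices $L^j_1,L^j_2$ attached to each $F_j$ are for --- disabling $F_j$ as a possible witness-neighbour costs two distinct endpoints of added edges --- while the choice $\eta=2k+1$ guarantees an untouched copy $X_{i^*}$ whose weak supports still force the existence of such a witness among the $F_j$'s. The remainder is routine bookkeeping of leaves, supports and weak supports in the modified graph, combined with Theorem~\ref{first-characterization}.
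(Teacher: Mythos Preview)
Your proof is correct and follows essentially the same approach as the paper's: the forward direction is identical (add the edges $L^{j}_1L^{j}_2$ for $F_j$ in the cover and invoke Theorem~\ref{first-characterization}), and the reverse direction rests on the same two ideas --- the pigeonhole use of $\eta=2k+1$ to locate an ``untouched'' copy $X_{i^*}$, and the observation that turning any $F_j$ into a non-support consumes two distinct pendant vertices $L^j_1,L^j_2$ as endpoints of added edges.

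The only presentational difference is that the paper, instead of bounding $|\mathcal{C}|$ directly via the endpoint count $2|\mathcal{C}|\le 2|A|$, first replaces an arbitrary solution $\EE$ by a \emph{consistent} one (all edges of the form $L^j_1L^j_2$), justified through an auxiliary claim about disjoint vertex pairs in a graph with no isolated vertices; your direct counting argument short-circuits this detour and is arguably cleaner.
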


\begin{proof}
Without loss of generality assume that $\CC=\{F_1,F_2,\ldots,F_k\}$ is a solution to the Set Cover problem for $\langle G,k\rangle$.  Let $G^\cuper_k$ be the graph resulting from $G^\cuper$ by adding $k$ edges $L^1_1L^1_2,L^2_1L^2_2,\ldots,L^k_1L^k_2$. Since $\CC$ is a set cover of $G$, we have  $N_{G^\cuper_k}(s)-(L_{G^\cuper_k}\cup S_{G^\cuper_k})\not=\emptyset$ for every weak support $s \in S'_{G^\cuper_k}$, and hence $G^\cuper_k$ is a $D\!D_2$-graph by Theorem~\ref{first-characterization}.

On the other hand, let $\EE=\{e_1,\ldots,e_k\}$ be a solution to the Min-to-$D\!D_2$ problem in~$G^\cuper$. If $k = |\FF|$, then $\FF$ itself constitutes the required set cover for $(\UU,\FF)$. Thus assume $k \le |\FF|$. Let $U_i=\{u^i_1,\ldots,u^i_n\}$ be the subset of weak supports in $G^\cuper$, $i=1,\ldots,\eta$. In order to be a~$D\!D_2$-graph by $G^\cuper_\EE=G^\cuper+\EE \ (=G^\cuper_\EE +\{e_1,\ldots,e_k\})$, taking into account Theorem~\ref{first-characterization} and the structure of $G^\cuper$, in particular, $\eta=2k+1$ and the fact that all its weak supports are pairwise independent, it follows that for each $U_i$, there exists $j_i$ such that  the vertex $F_{j_i}$ is not a support vertex in $G^\cuper_\EE$ any longer. Let $J=\{j_i : i \in \{1,2,\ldots, \eta\}\}$. Next, observe that to make $F_{j_i}$ a non-support vertex in $G^\cuper_\EE$, there must be an edge in $\EE$ incident to the leaf $L^{j_i}_1$ and an edge in $\EE$ incident to the leaf $L^{j_i}_2$.

If $\EE$ is {\em consistent}, that is, if each of the edges in $\EE$ is of the form $L^j_1L^j_2$ for some $j \in \{1,2,\ldots,m\}$, then the family $\{F_{j} : j \in J\}$ constitutes a solution of size at most~$k$ to the Set Cover problem for $\langle G,k\rangle$, as $|J|=k$.

Therefore assume that $\EE$ is not consistent. The idea is to replace $\EE$ with another set of at most $k$ edges which also is a solution to the Min-to-$D\!D_2$-problem for $\langle G^\cuper,k\rangle$, but which is consistent. Our replacement it is based upon the following simple claim.
\begin{claim}\label{clm:5}
{\em Let $H=(V_H,E_H)$ be a graph with $\delta(H) \ge 1$. If $S$ is a set of disjoint pairs of vertices in $H$, then $|S| \le |E_H|$.}
\end{claim}
In particular, considering $H$ as the graph whose edge set is $\EE$ and vertex set is the set of all endpoints of edges in $\EE$, and setting $S=\bigcup_{j \in J}\{(L^j_1,L^j_2)\}$, we obtain $|J|=|S| \le k$. Therefore, the set $\EE'=\bigcup_{j \in J} \{L^j_1L^j_2\}$ is of size at most $k$ and, by the definition of the set $J$, it also constitutes a solution to the Min-to-$D\!D_2$ problem for $\langle G^\cuper,k\rangle$ (as each edge $L^j_1L^j_2$  makes $F_{j}$ a non-support vertex in $G^\cuper + \EE'$, for each $j \in J$). Since $\EE'$ is consistent from the definition, the family $\{F_{j} : j \in J\}$ is a solution of size at most~$k$ to the Set Cover problem for $\langle G,k\rangle$ as $|J| \le k$.
\end{proof}

A non-deterministic polynomial algorithm for the Min-to-$D\!D_2$ problem in $G$ just guesses at most $k$ ``missing'' edges and checks whether the graph resulting from adding these edges to $G$ is a $D\!D_2$-graph (which can be done in polynomial time by Remark~\ref{remark1}). Hence by Lemma~\ref{lem:SC-Min-to-DD2} and the fact that the Set Cover problem is NP-complete~\cite{K74}, we obtain the following theorem.

\begin{thm}
The Min-to-$D\!D_2$ problem is NP-complete. Consequently, given a non-$D\!D_2$-graph $G$, the problem of determining the minimum number of missing edges after adding which the resulting graph becomes a $D\!D_2$-graph is NP-hard.
\end{thm}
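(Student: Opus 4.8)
The plan is to prove the two halves of ``NP-complete'' --- membership in NP and NP-hardness --- and then derive the optimization statement as a routine corollary.

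For membership in NP, a certificate for a yes-instance $\langle G,k\rangle$ is simply a set $E'$ of at most $k$ edges on the vertex set of $G$; the verifier forms $G+E'$ and tests whether it is a $D\!D_2$-graph. By Remark~\ref{remark1} this test takes $O(|V_G|+|E_G|+k)$ time, so the verification is polynomial. This is exactly the nondeterministic algorithm sketched in the paragraph preceding the theorem, so the Min-to-$D\!D_2$ problem lies in NP.

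For NP-hardness I would invoke the reduction already set up. Starting from an instance $\langle G=(\UU\cup\FF,E),k\rangle$ of Set Cover with $k\le|\FF|$, build the graph $G^\cuper$ as defined before Lemma~\ref{lem:SC-Min-to-DD2}. This construction takes polynomial time: its order is $(2k+1)n+3m$ and its size is $2m+(2k+1)\bigl(n+\sum_{j=1}^m|F_j|\bigr)$, both polynomial in $n=|\UU|$ and $m=|\FF|$. By Lemma~\ref{lem:SC-Min-to-DD2}, $(\UU,\FF)$ admits a set cover of size $k$ if and only if at most $k$ edges can be added to $G^\cuper$ to make it a $D\!D_2$-graph, i.e.\ if and only if $\langle G^\cuper,k\rangle$ is a yes-instance of Min-to-$D\!D_2$. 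Since Set Cover is NP-complete (\cite{K74}), it follows that Min-to-$D\!D_2$ is NP-hard, and together with the previous paragraph it is NP-complete.

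Finally, the optimization version --- given a non-$D\!D_2$-graph $G$, compute the least number of edges whose addition yields a $D\!D_2$-graph --- is NP-hard because it is at least as hard as the decision version: the minimum value immediately answers the question ``can one do it with at most $k$ edges?'', so a polynomial algorithm for the optimization version would decide the NP-complete problem in polynomial time. I do not expect any real obstacle here: the combinatorial substance sits entirely in Lemma~\ref{lem:SC-Min-to-DD2}, which may be assumed, and the only point deserving a moment's care is that taking $\eta=2k+1$ copies of the gadget $X_i$ keeps $G^\cuper$ of polynomial size, which is immediate from the explicit counts above.
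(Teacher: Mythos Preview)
Your proposal is correct and follows essentially the same route as the paper: membership in NP via the nondeterministic ``guess $\le k$ edges and test with Remark~\ref{remark1}'' argument, and NP-hardness by invoking Lemma~\ref{lem:SC-Min-to-DD2} together with the NP-completeness of Set Cover, with the optimization consequence drawn by the standard decision-to-optimization reduction. The paper's own proof is in fact just the short paragraph preceding the theorem; your write-up is slightly more detailed (you spell out the polynomial size of $G^\cuper$ and the optimization corollary explicitly), but there is no substantive difference.
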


Finally, since being a $D\!D_2$-graph is closely related to the operation of subdivision an edge (see Theorem~\ref{thm:DD2-minimal}), given a non-$D\!D_2$-graph, one can also ask for the minimum number of such operations  after applying which the resulting graph becomes a $D\!D_2$-graph. Fortunately, in this case the problem is polynomially tractable, by a simple reduction to the maximum matching problem. Namely, let $X_G$ denote the set of all weak supports of a graph $G$ such that $N_G(s)-(L_G\cup S_G)=\emptyset$ for every $s \in X_G$ (if $G$ is a $D\!D_2$-graph, then $X_G$ is the empty set).  Consider now the graph $H$ resulting from subdivision of an edge $e \in E_G$. Observe that $X_H \subseteq X_G$ and $|X_H| \ge |X_G|-2$, in other words, subdividing $e$ excludes at most two vertices in $X_G$. In particular, $|X_H| = |X_G|-2$ if and only if $e=s_1s_2$ for some $s_1,s_2 \in X_G$. Consequently, the minimum number of edge subdivisions is equal to $|X_G|-|M|$, where $M$ is the maximum matching in the induced subgraph $G[X_G]$, which immediately results in the following theorem.

\begin{thm}
Given a non-$D\!D_2$-graph $G$, the minimum number of edge subdivision in $G$ after applying which the resulting graph becomes a $D\!D_2$-graph can be computed in polynomial time.
\end{thm}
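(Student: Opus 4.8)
The plan is to turn the remark preceding the theorem into an exact formula and then invoke a polynomial-time matching algorithm. Recall that, by Theorem~\ref{first-characterization}, a graph $G$ fails to be a $D\!D_2$-graph precisely when $X_G\neq\emptyset$, where $X_G$ is the set of weak supports $s$ of $G$ with $N_G(s)-(L_G\cup S_G)=\emptyset$; and recall the monotonicity facts recorded just above: if $H$ arises from $G$ by subdividing a single edge $e$, then $X_H\subseteq X_G$, one has $|X_G|-|X_H|\le 2$, and $|X_G|-|X_H|=2$ holds if and only if $e$ joins two vertices of $X_G$ (in which case a one-line check shows $X_H=X_G\setminus V(e)$, $V(e)$ denoting the endpoint set of $e$). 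The goal is to prove that the minimum number of subdivisions needed to turn $G$ into a $D\!D_2$-graph equals $|X_G|-|M|$, where $M$ is a maximum matching of the induced subgraph $G[X_G]$, and then to observe that this quantity is computable in polynomial time.

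First I would prove the lower bound $|X_G|-|M|$. Consider any sequence of subdivisions $e_1,\dots,e_t$ producing graphs $G=G_0,G_1,\dots,G_t$ with $G_t$ a $D\!D_2$-graph, i.e.\ $X_{G_t}=\emptyset$. By the monotonicity facts, $X_G=X_{G_0}\supseteq X_{G_1}\supseteq\dots\supseteq X_{G_t}=\emptyset$, each drop satisfies $|X_{G_{i-1}}|-|X_{G_i}|\le 2$, and a drop of $2$ at step $i$ forces $e_i$ to join two vertices of $X_{G_{i-1}}\subseteq X_G$ with $X_{G_i}=X_{G_{i-1}}\setminus V(e_i)$. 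Let $D$ be the set of indices where the drop is $2$. For $i\in D$, the edge $e_i$ has both endpoints in $X_G\subseteq V_G$; since every edge of $G_{i-1}$ that is not already an edge of $G$ is incident with one of the newly inserted degree-two vertices (none of which lies in $X_G$), $e_i$ is in fact an edge of $G[X_G]$. Moreover, the two endpoints of $e_i$ leave the nested family of $X$-sets at step $i$ and never return, so no vertex is an endpoint of two of the edges $\{e_i:i\in D\}$; hence this set is a matching of $G[X_G]$ and $|D|\le|M|$. Summing the drops, $|X_G|=\sum_{i=1}^{t}\bigl(|X_{G_{i-1}}|-|X_{G_i}|\bigr)\le 2|D|+(t-|D|)=t+|D|\le t+|M|$, so $t\ge|X_G|-|M|$.

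For the matching upper bound I would exhibit an explicit subdivision scheme. Take a maximum matching $M$ of $G[X_G]$ and subdivide its edges one at a time. By the equality case of the monotonicity fact, subdividing $s_1s_2\in M$ deletes exactly $s_1$ and $s_2$ from the current $X$-set and leaves untouched every other matching edge, so after processing all of $M$ we reach a graph $G'$ with $X_{G'}=X_G\setminus V(M)$, the set of $|X_G|-2|M|$ vertices of $X_G$ left uncovered by $M$. Each such vertex $s$ is still a weak support of $G'$, hence has a unique leaf neighbour $\ell_s$; subdividing the pendant edge $s\ell_s$ replaces $\ell_s$ by a degree-two vertex in $N(s)$, so $s$ loses its only leaf neighbour, is no longer a support, and therefore leaves the $X$-set (while, by monotonicity, nothing relevant is added). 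Performing one such subdivision for each of the $|X_G|-2|M|$ uncovered vertices yields a $D\!D_2$-graph after at most $|M|+(|X_G|-2|M|)=|X_G|-|M|$ subdivisions, matching the lower bound; hence the minimum is exactly $|X_G|-|M|$.

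Finally I would assemble the complexity statement: identifying the leaves, supports and weak supports of $G$, and hence $X_G$, takes $O(n+m)$ time as in Remark~\ref{remark1}; a maximum matching $M$ of the (not necessarily bipartite) graph $G[X_G]$ is found in polynomial time by Edmonds' blossom algorithm; and the sought minimum is then simply $|X_G|-|M|$, with an optimal subdivision sequence produced by the construction above. I expect the only genuinely delicate point to be the lower bound --- specifically, checking that the ``double-drop'' subdivisions are honest edges of $G[X_G]$ and are pairwise vertex-disjoint; both follow from the monotonicity facts stated before the theorem, but they are exactly what makes the matching number, rather than merely a factor of two, the right quantity.
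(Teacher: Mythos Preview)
Your argument mirrors the paper's sketch exactly, and both stand or fall with the two ``monotonicity facts'' you invoke: that a single subdivision of an edge $e$ always yields $X_H\subseteq X_G$ and $|X_G|-|X_H|\le 2$.  Unfortunately these facts are false, and with them your lower bound (and the paper's formula) collapses.  Consider the tree $G$ on vertices $u,\ell_u,w_1,\ell_1,w_2,\ell_2$ with edges $u\ell_u,\,uw_1,\,uw_2,\,w_1\ell_1,\,w_2\ell_2$.  Here every support is weak, $X_G=\{u,w_1,w_2\}$, and $G[X_G]\cong P_3$ has maximum matching size $1$, so the formula predicts a minimum of $|X_G|-|M|=2$ subdivisions.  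Yet subdividing the single pendant edge $u\ell_u$ already produces a $D\!D_2$-graph: in the resulting graph $u$ has no leaf neighbour, so $u\notin S_H$, and then the new vertex, $w_1$, and $w_2$ all acquire $u$ as a neighbour outside $L_H\cup S_H$, giving $X_H=\emptyset$.  Thus one subdivision removes three vertices from $X$ at once, contradicting both $X_H\subseteq X_G$ (here $|X_G|-|X_H|=3$) and the lower bound $t\ge |X_G|-|M|$.

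The point you (and the paper) miss is that subdividing a pendant edge at a weak support $s$ does more than eliminate $s$ from $X$: it strips $s$ of its support status, and every vertex of $X_G$ adjacent to $s$ is simultaneously repaired.  So the ``double-drop'' edges you isolate are not the only mechanism for a drop of size two or more, and bounding them by a matching in $G[X_G]$ is not the right invariant.  Your upper-bound construction is fine as stated --- it does produce a $D\!D_2$-graph after $|X_G|-|M|$ steps --- but the lower bound fails, so the claimed equality $|X_G|-|M|$ for the minimum is incorrect.  Any correct proof of the theorem needs either a sharper combinatorial invariant (taking into account the cascading effect just described) or a different polynomial-time formulation altogether.
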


\section{Closing open problems}
We close this paper with the following list of open problems that we have yet to settle.

\begin{prob}
	Characterize the class of spanning supergraphs of minimal $D\!D_2$-graphs $G$ for which $\gamma\gamma_2(G) = |V_G|$, see Remark~\ref{remark4}.
\end{prob}

\begin{prob}
	Characterize the class of spanning supergraphs of minimal $D\!D_2$-graphs for which the smallest dominating set is a certified dominating set, see the definition before Theorem~\ref{first-characterization}.
\end{prob}

There are also some algorithmic issues is related to the optimization problems discussed in Sections~\ref{sec:DD2span} and~\ref{sec:nonDD2}. For example, observe that by adding edges connecting distinct pairs of leaves that are adjacent to weak supports, we obtain a $D\!D_2$-graph. However, for the corona graph $G$ of a star $K_{1,n}$ of order at least three --- notice that such $G$ is not a $D\!D_2$-graph --- it is enough to add only one properly chosen edge of this type. Therefore, we state the following three problems.
%the aforementioned approach may perform arbitrarily bad.

\begin{prob}
Provide an efficient approximation algorithm for the problem of determining:
\begin{itemize}
    \item[$(1)$] a spanning minimal $D\!D_2$-graph of the minimum size;
    \item[$(2)$] a spanning minimal $D\!D_2$-graph of the maximum size;
    \item[$(3)$] the smallest number of edges which added to a graph result in a $D\!D_2$-graph.
\end{itemize}
\end{prob}

\end{document}